\newtheorem{thm}[subsubsection]{Theorem}
\newtheorem{prop}[subsubsection]{Proposition}
\newtheorem{lemma}[subsubsection]{Lemma}
\newcommand{\SP}{\vspace{0.2cm}\par}
\newcommand{\dd}{\mathbf{\mathrm{d}}}
\newcommand{\mS}{\mathbb{S}}
\newcommand{\mmS}{\mathcal{S}}
\newcommand{\nP}{\mathbf{P}}
\newcommand{\nT}{\mathbf{T}}
\newcommand{\nB}{\mathbf{B}}
\newcommand{\mA}{\mathcal{A}}
\newcommand{\mB}{\mathcal{B}}
\newcommand{\mE}{\mathcal{E}}
\newcommand{\mR}{\mathcal{R}}
\newcommand{\Seq}[1]{\mathrm{Seq}\left(#1\right)}
\newcommand{\bA}{\textrm{A}}
\newcommand{\gA}{\textbf{A}}
\newcommand{\gB}{\textbf{B}}
\begin{document}

\author[J. Ru\'e]{Juanjo Ru\'e}
\address{J. Ru\'e: CNRS, Laboratoire d'Informatique, \'Ecole Polytechnique, 91128 Palaiseau Cedex, France}
\email{rue1982@lix.polytechnique.fr}

\author[I. Sau]{Ignasi Sau}
\address{I. Sau: CNRS, LIRMM, Montpellier, France}
\email{ignasi.sau@lirmm.fr}

\author[D. M. Thilikos]{Dimitrios M. Thilikos}
\address{D. M. Thilikos: Department of Mathematics, National and Kapodistrian University of Athens, Greece}
\email{sedthilk@math.uoa.gr}


\title[Asymptotic enumeration of non-crossing partitions on surfaces]{Asymptotic enumeration of non-crossing\\partitions on surfaces$^{\star}$}

\thanks{$^{\star}$Most of the results of this paper were announced in the extended abstract ``\emph{Dynamic programming for graphs on surfaces. Proc. of ICALP'2010, volume 6198 of LNCS, pages 372-383}'', which is a combination of an algorithmic framework (whose full version can be found
in~\cite{RST10_algo_Arxiv}) and the enumerative results presented in this paper.\\
The first author is supported by the European Research Council under the
European Community's 7th Framework Programme, ERC grant agreement no
208471 - ExploreMaps project. The second author is supported by projects
ANR Agape and ANR Gratos.
The third author is supported by the project ``Kapodistrias'' (A${\rm
\Pi}$ 02839/28.07.2008) of the National and Kapodistrian University of
Athens.}

\date{}

\maketitle
\begin{abstract}\noindent
We generalize the notion of non-crossing partition on a disk to general
surfaces with boundary. For this, we consider a surface $\Sigma$
and introduce the number  $C_{\Sigma}(n)$ of non-crossing partitions of a
set of  $n$ points laying  on the boundary of $\Sigma$.
 Our proofs use bijective techniques arising from map enumeration, joint with the symbolic method and singularity analysis on generating functions.
An outcome of our results is that the exponential growth of
$C_{\Sigma}(n)$ is the same as the one of the $n$-th Catalan number, i.e.,
does not change when we move from the case where $\Sigma$ is a disk to
general surfaces with boundary.

 \end{abstract}


\section{Introduction}
\label{sec:intro}

In combinatorics, a \emph{non-crossing partition} of size $n$ is a
partition of the set $\{1,2,\dots ,n\}$ with the following property: if
$1\leq a<b<c<d\leq n$ and a subset of the non-crossing partition contains
$a$ and $c$, then no other subset contains both $b$ and $d$. One can
represent such a partition on a disk by placing $n$ points on the boundary
of the disk, labeled in cyclic order, and drawing each subset as a convex
polygon (also called \emph{block}) on the points belonging to the subset.
Then, the ``non-crossing'' condition is equivalent to the fact that the
drawing is plane and the blocks are pairwise disjoint. The enumeration of
non-crossing partitions of size $n$ is one of the first nontrivial
problems in enumerative combinatorics: it is well-known that the number of
these structures (either by using direct root
decompositions~\cite{FlajoletNoy:Noncrossing} or bijective
arguments~\cite{Stanley2}) corresponds to
Catalan numbers. More concretely, the number of non-crossing partitions of $\{1,2,\dots,n\}$ on a disk is equal to the Catalan number $C(n)=\frac{1}{n+1}\binom{2n}{n}$. This paper deals with the generalization of the notion of non-crossing partition on surfaces of higher genus with boundary, orientable or not. \\

\paragraph{\bf Non-crossing partitions on surfaces.}
Let $\Sigma$ be a surface with boundary, assuming that this boundary is a
collection of cycles.
 Let also $S$ be a set of $n$ points on the boundary of $\Sigma$
(we assume that $\Sigma$ is a closed set). A {\em partition $\mathcal{P}$}
of $S$ is {\em non-crossing on} $\Sigma$ if there exists a collection
$\mathcal{S}=\{X_{1},\ldots,X_{r}\}$ of mutually non-intersecting
connected closed subsets of $\Sigma$ such that $\mathcal{P}=\{X_{1}\cap
S,\ldots,X_{r}\cap S\}$. We define by $\Pi_{\Sigma}(n)$ the set of all
non-crossing partitions of $\{1,\ldots,n\}$ on $\Sigma$ and we denote
$C_{\Sigma}(n)=|{\Pi}_{\Sigma}(n)|$.

In the elementary case where $\Sigma$ is a disk, the enumeration of
non-crossing partitions can be directly reduced by bijective arguments to
the map enumeration framework and therefore, in this case $C_{\Sigma}(n)$
is the $n$-th Catalan number. However, to  generalize the notion of
non-crossing partition to surfaces of higher genus is not straightforward.
The main difficulty is that there is not a bijection between non-crossing
partitions of a set of size $n$ on a surface $\Sigma$ and its geometric
representation (see Figure~\ref{twodif} for an example of a partition with
two different geometric representations).
%
\begin{figure}[htb]
\begin{center}
\includegraphics[scale=.2]{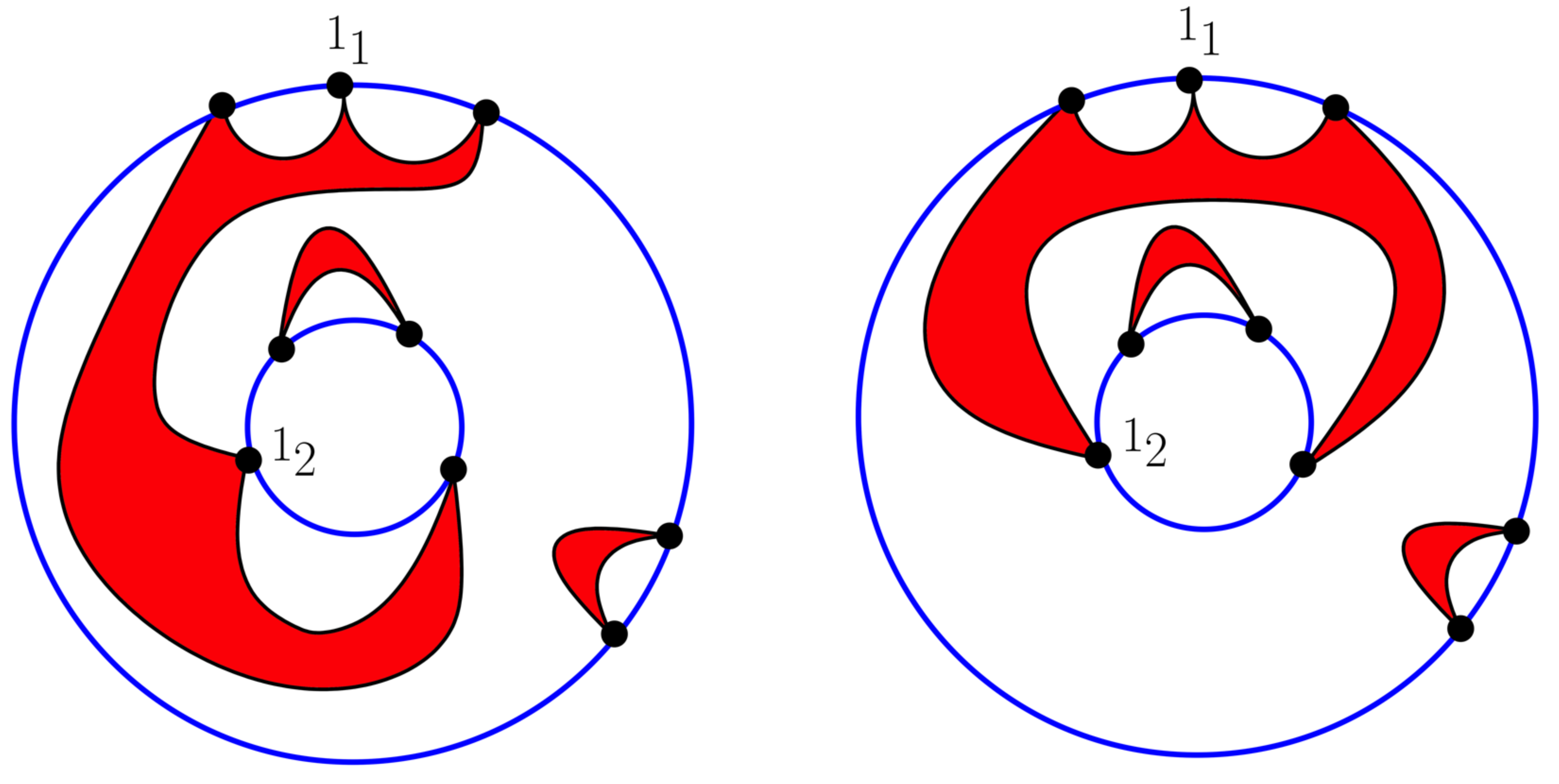}
\caption{Two different representations of the same
partition.}\label{fig:intro-surfaces}
\end{center}
\label{twodif}
\end{figure}
In this paper we study enumerative properties of this geometric
representation. From this study we deduce asymptotic estimates for the
subjacent non-crossing partitions for every surface $\Sigma$.

\SP
\paragraph{\textbf{Our results and techniques.}} The main result of this paper is the following: let $\Sigma$ be a surface with Euler characteristic
$\chi\left(\Sigma\right)$ and whose boundary has
$\beta\left(\Sigma\right)$ connected components. Then the number of
non-crossing partitions on $\Sigma$, $C_{\Sigma}(n)=|\Pi_{\Sigma}(n)|$,
verifies the asymptotic upper bound
\begin{equation}\label{eq:main}
|\Pi_{\Sigma}(n)|\leq _{n\rightarrow
\infty}\frac{c(\Sigma)}{\Gamma\left(-3/2\chi(\Sigma)+\beta(\Sigma)\right)}\cdot
n^{-3/2\chi(\Sigma)+\beta(\Sigma)-1} \cdot 4^n,
\end{equation}
where $\Gamma$ is the Gamma function:
$\Gamma(u)=\int_{0}^{\infty}t^{u-1}e^{-t}dt$. (For a bound on $c(\Sigma)$,
see Section~\ref{apen:bounds-C}.) This upper bound, together with the fact
that every non-crossing partition on a disk admits a realization on
$\Sigma$ (in other words, $C(n)\leq C_\Sigma(n)$), give the result
\begin{eqnarray}
\lim_{n\rightarrow \infty} C_{\Sigma}(n)^{1/n}=\lim_{n\rightarrow
\infty}C(n)^{1/n}=4.
\end{eqnarray}
In other words, $C_{\Sigma}(n)$ has the {\sl same} exponential growth as
the Catalan numbers, no matter the surface $\Sigma$.

In order to get the upper bound~\eqref{eq:main}, we argue in three levels:
we start from a topological level, stating the precise definitions of the
objects we want to study, and showing that we can restrict ourselves to
the study of hypermaps and bipartite maps~\cite{cori}. Once we restrict
ourselves to the map enumeration framework, we use the ideas
of~\cite{BernardiRue:triangulations-higher-genus}, joint with the work by
Chapuy, Marcus, and Schaeffer on the enumeration of higher genus
maps~\cite{schaeffer-highergenus} and
constellations~\cite{chapuy-constelations} in order to obtain
combinatorial decompositions of the dual maps of the objects under study.
Finally, once we have explicit expressions for the generating functions of
these combinatorial families, we study generating functions (formal power
series) as analytic objects. In the analytic step, we extract singular
expansions of the counting series from the resulting generating functions.
We derive asymptotic formulas from these singular expansions by extracting
coefficients, using the Transfer Theorems of singularity
analysis~\cite{FlajoletSedgewig:analytic-combinatorics, FlaOdl}.

\SP
\paragraph{\textbf{Application to algorithmic graph theory.}} The asymptotic analysis carried out
in this paper has important consequences in the design of algorithms for
graphs on surfaces: the enumeration of non-crossing partitions has been
used in~\cite{RST10_algo_Arxiv} to build a framework for the design of
$2^{O(k)}\cdot n^{O(1)}$ step dynamic programming algorithms to solve a
broad class of \textsc{NP}-hard optimization problems for surface-embedded
graphs on $n$ vertices of branchwidth at most $k$. The approach is based
on a new type of branch decomposition called \emph{surface cut
decomposition}, which generalizes sphere cut decompositions for planar
graphs introduced by Seymour and Thomas~\cite{SeymourT94}, and where
dynamic programming should be applied for each particular problem. More
precisely, the use of surface cut decompositions yields algorithms with
running times with a {\em single-exponential dependence} on branchwidth,
and allows to unify and improve all previous results in this active field
of parameterized complexity~\cite{DFT06,DPBF05}. The key idea is that the
size of the tables of a dynamic programming algorithm over a surface cut
decomposition can be upper-bounded in terms of the non-crossing partitions
on surfaces with boundary. See~\cite{RST10_algo_Arxiv} for more details
and references.

\SP
\paragraph{\textbf{Outline of the paper.}} In Section~\ref{sec:prelim} we include all the definitions
and the required background concerning topological surfaces, maps on
surfaces, the symbolic method in combinatorics, and the singularity
analysis on generating functions. In Section~\ref{sec:enumeration} we
state the precise definition of non-crossing partition on a general
surface, as well as the connection with the map enumeration framework.
Upper bounds for the number of non-crossing partitions on a surface
$\Sigma$ with boundary are obtained in Section~\ref{sec:upper-bounds}, and
the main result is proved. A more detailed study of the constant
$c(\Sigma)$ of Equation~\eqref{eq:main} is done in
Section~\ref{apen:bounds-C}.

%

\section{Background and definitions}\label{sec:prelim}
In this section we state all the necessary definitions and results needed
in the sequel. In Subsection~\ref{sec:prelim:topo} we state the main
results concerning topological surfaces, and in
Subsection~\ref{sec:prelim:embedded} we recall the basic definitions about
maps on surfaces. Finally, in Subsection~\ref{sec:prelim:symbolic} we make
a brief summary of the symbolic method in combinatorics, as well as the
basic techniques in singularity analysis on generating functions.

\subsection{Topological surfaces}\label{sec:prelim:topo}
In this work, surfaces are compact (hence closed and bounded) and their
boundary is homeomorphic to a finite set
(possibly empty) of disjoint simple circles. We denote by
$\beta\left(\Sigma\right)$ the number of connected components of the
boundary of a surface $\Sigma$. The Surface Classification
Theorem~\cite{Mohar:graphs-on-surfaces} asserts that a compact and
connected surface without boundary is determined, up to homeomorphism, by
its Euler characteristic $\chi\left(\Sigma\right)$ and by its
orientability. More precisely, orientable surfaces are obtained by adding
$g\geq 0$ \emph{handles} to the sphere $\mS^2$, obtaining a surface with
Euler characteristic  $2-2g$. Non-orientable surfaces are obtained by
adding $h>0$ \emph{cross-caps} to the sphere, getting a non-orientable
surface with Euler characteristic $2-h$. We denote by $\overline{\Sigma}$
the surface (without boundary) obtained from $\Sigma$ by gluing a disk on
each of the $\beta(\Sigma)$ components of the boundary of $\Sigma$. It is
then easy to show that $\chi\left(\overline{\Sigma}\right)=\beta(\Sigma) +
\chi(\Sigma)$. In other words, surfaces under study are determined, up to
homeomorphism, by their orientability, their Euler characteristic, and t
he number of connected components of their boundary. %

A \emph{cycle} on $\Sigma$ is a topological subspace of $\Sigma$ which is
homeomorphic to a circle. We say that a cycle $\mS^1$ \emph{separates}
$\Sigma$ if $\Sigma\setminus\mS^1$ has two connected components. The
following result concerning a separating cycle is an immediate consequence
of~\cite[Proposition~4.2.1]{Mohar:graphs-on-surfaces}.
\begin{lemma}\label{lemma:particio}
Let $\Sigma$ be a surface with boundary and let $\mathbb{S}^1$ be a
separating cycle on $\Sigma$. Let $V_1$ and $V_2$ be connected surfaces
obtained by cutting $\Sigma$ along $\mathbb{S}^1$ and gluing a disk on the
newly created boundaries. Then $\chi(\Sigma)=\chi(V_1)+\chi(V_2)-2$.
\end{lemma}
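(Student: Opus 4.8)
The plan is to reduce the statement to an additivity property of the Euler characteristic under gluing, and to invoke the cited Proposition~4.2.1 of Mohar–Thomassen as a black box. First I would set up notation: write $\mathbb{S}^1$ for the separating cycle, so that $\Sigma\setminus\mathbb{S}^1$ has exactly two connected components, say $\Sigma_1^\circ$ and $\Sigma_2^\circ$. Cutting $\Sigma$ along $\mathbb{S}^1$ produces two compact surfaces $\Sigma_1$ and $\Sigma_2$, each carrying one extra boundary circle (a copy of $\mathbb{S}^1$) in addition to whatever boundary components of $\Sigma$ it inherited; then $V_1$ and $V_2$ are obtained from $\Sigma_1$ and $\Sigma_2$ by capping off precisely that newly created boundary circle with a disk.

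The key computation is the behaviour of $\chi$ under the two operations involved. First, cutting along $\mathbb{S}^1$: since $\mathbb{S}^1$ has a tubular (annular) neighbourhood in $\Sigma$, I can describe $\Sigma$ as $\Sigma_1 \cup_{\mathbb{S}^1} \Sigma_2$, i.e.\ the two cut pieces glued back along a common circle. By the inclusion–exclusion formula for Euler characteristic applied to a CW-decomposition respecting this circle, $\chi(\Sigma)=\chi(\Sigma_1)+\chi(\Sigma_2)-\chi(\mathbb{S}^1)=\chi(\Sigma_1)+\chi(\Sigma_2)$, because $\chi(\mathbb{S}^1)=0$. Second, capping a boundary circle with a disk: gluing a disk $D$ along its boundary circle to a surface $\Sigma_i$ gives $\chi(V_i)=\chi(\Sigma_i)+\chi(D)-\chi(\mathbb{S}^1)=\chi(\Sigma_i)+1$, again since $\chi(\mathbb{S}^1)=0$ and $\chi(D)=1$. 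Combining the two, $\chi(\Sigma)=\chi(\Sigma_1)+\chi(\Sigma_2)=(\chi(V_1)-1)+(\chi(V_2)-1)=\chi(V_1)+\chi(V_2)-2$, which is exactly the claimed identity.

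The role of the cited Proposition~4.2.1 is to guarantee that the cut pieces $\Sigma_1,\Sigma_2$ are themselves honest compact surfaces with the expected boundary structure (one new boundary circle each), and in particular that $V_1,V_2$ are well defined and connected; the additivity of $\chi$ is then a formal consequence. I expect the main (and really only) obstacle to be bookkeeping subtleties in the gluing/cutting step: one must be careful that cutting a surface with boundary along a separating simple closed curve that lies in the interior does produce two compact surfaces each inheriting a single extra boundary component, so that the inclusion–exclusion over the shared circle is valid. Once that topological setup is cleanly stated (citing Mohar–Thomassen for the surface-classification-level facts), the arithmetic with $\chi(\mathbb{S}^1)=0$, $\chi(D)=1$ is immediate and the lemma follows.
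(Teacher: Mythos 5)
Your proof is correct. Note that the paper does not actually argue this lemma at all: it states it as an ``immediate consequence'' of Proposition~4.2.1 of Mohar--Thomassen and moves on, so there is no internal proof to match; your explicit argument --- inclusion--exclusion of the Euler characteristic over the shared circle (using $\chi(\mathbb{S}^1)=0$) for the cutting step, and $\chi(V_i)=\chi(\Sigma_i)+1$ for capping with a disk --- is exactly the standard computation that the citation is standing in for, and it buys a self-contained verification rather than a black-box appeal. The only point you assume tacitly is that the separating cycle is two-sided, so that it has an annular neighbourhood and cutting along it yields two compact pieces each with precisely one new boundary circle; this is automatic (a one-sided simple closed curve never separates), but it is worth one sentence, since it is what legitimizes both the decomposition $\Sigma=\Sigma_1\cup_{\mathbb{S}^1}\Sigma_2$ and the bookkeeping of boundary components that you flag as the main subtlety.
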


\subsection{Maps on surfaces and duality}\label{subs:maps-duality}

\label{sec:prelim:embedded} Our main reference for maps is the monograph
of Lando and Zvonkin~\cite{rusos:maps}. A \emph{map} on $\Sigma$ is a
partition
of $\Sigma$ in zero, one, and two dimensional sets homeomorphic to zero,
one and two dimensional open disks, respectively (in this order,
\emph{vertices}, \emph{edges}, and \emph{faces}). The set of vertices,
edges and faces of a map $M$ is denoted by $V(M)$, $E(M)$, and $F(M)$,
respectively. We use $v(M),e(M)$, and $f(M)$ to denote $|V(M)|, |E(M)|$,
and $|F(M)|$, respectively. The \emph{degree} $\dd(v)$ of a vertex $v$ is
the number of edges incident with $v$, counted with multiplicity (loops
are counted twice). An edge of a map has two ends (also called
\emph{half-edges}), and either one or two sides, depending on the number
of faces which is incident with. A map is \emph{rooted} if an edge and one
of its half-edges and sides are distinguished as the root-edge, root-end,
and root-side, respectively. Observe that rooting on orientable surfaces
usually omits the choice of a root-side because the subjacent surface
carries a global orientation, and maps are considered up to
orientation-preserving homeomorphism. Our choice of a root-side is
equivalent in the orientable case to the choice of an orientation of the
surface. The root-end and -sides define the root-vertex and -face,
respectively. Rooted maps are considered up to cell-preserving
homeomorphisms preserving the root-edge, -end, and -side. In figures, the
root-edge is indicated as an oriented edge pointing away from the root-end
and crossed by an arrow pointing towards the root-side (this last,
provides the orientation in the surface). For a map $M$, the \emph{Euler
characteristic} of $M$,  which is denoted by $\chi(M)$, is the Euler
characteristic of the underlying surface.
\\
\paragraph{\textbf{Duality}.} Given a map $M$ on a surface $\Sigma$ without boundary, the \emph{dual map} of $M$, which we denote by $M^*$, is a map on $\Sigma$ obtained by drawing a vertex of $M$ in each face of $M$ and an edge of $M$ across each edge of $M$. If the map $M$ is rooted, the root-edge $e$ of $M$ is defined in the natural way: the root-end and root-side of $M$ correspond to the side and end of $e$ which are not the root-side and root-end of $M$, respectively. This construction can be generalized to surfaces with boundary in the following way: for a map $M$ on a surface $\Sigma$ with boundary, notice that the (rooted) map $M$ defines a (rooted) map $\overline{M}$ on $\overline{\Sigma}$ by gluing a disk (which becomes a face of $\overline{M}$) along each boundary component of $\Sigma$. We call these faces of $\overline{M}$ \emph{external}. Then the usual construction for the dual map $\overline{M}^{*}$ applies using
the external faces. The dual of a map $M$ on a surface $\Sigma$ with
boundary is the map on $\overline{\Sigma}$, denoted $M^{*}$, constructed
from $\overline{M}^{*}$ by splitting each external vertex of
$\overline{M}^{*}$. The new vertices that are obtained are called
\emph{dangling leaves}, which have degree one. Observe that we can
reconstruct the map $M$ from $M^{*}$, by pasting the dangling leaves
incident with the same face, and applying duality. An example of this
construction is shown in Figure~\ref{fig:dual-map}.

\begin{figure}[htb]
\begin{center}
\includegraphics[scale=.257]{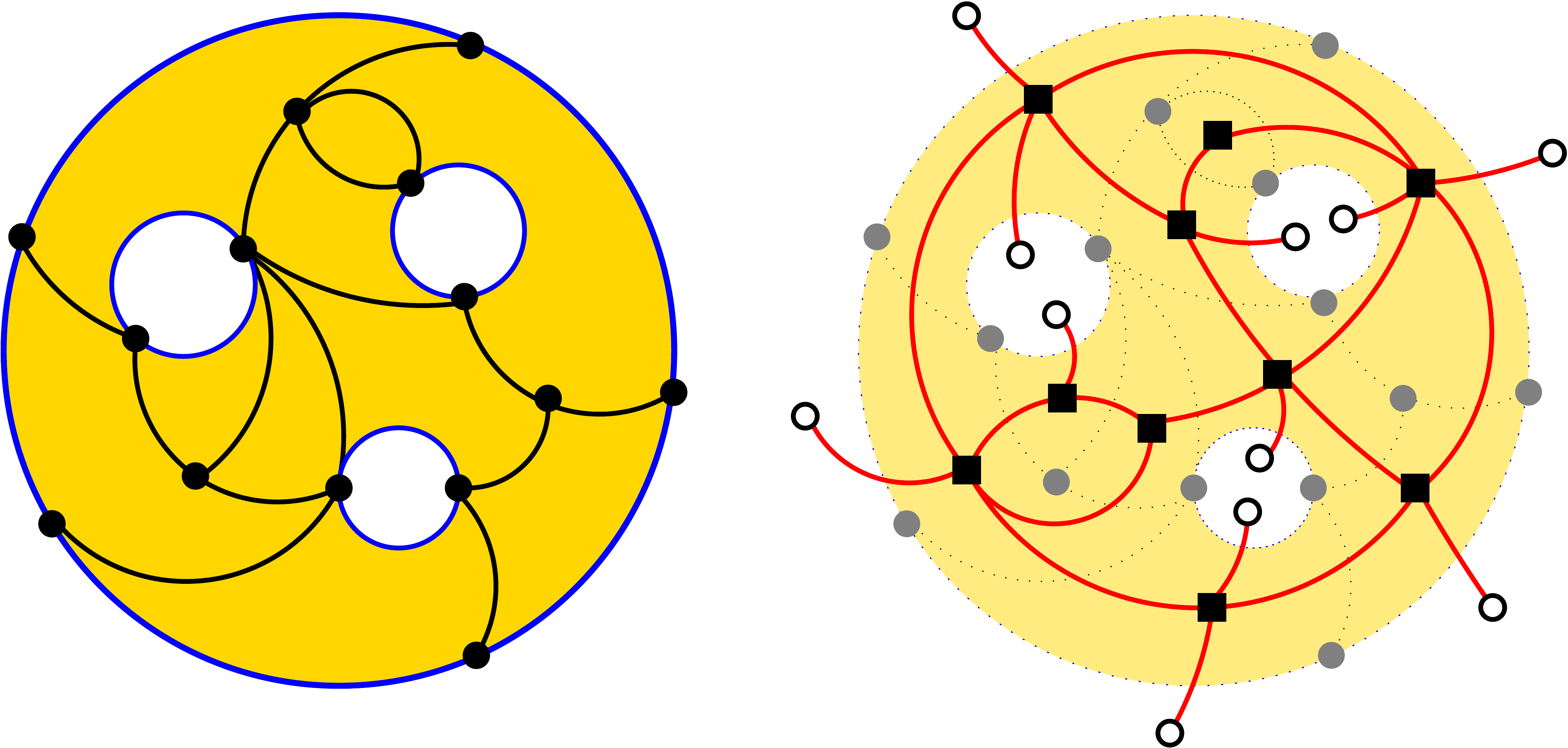}
\caption{A map with boundary and its dual.}\label{fig:dual-map}
\end{center}
\end{figure}

%
\subsection{The symbolic method and analytic combinatorics}\label{sec:prelim:symbolic}
Our main reference in enumerative combinatorics is the book of Flajolet
and Sedgewick~\cite{FlajoletSedgewig:analytic-combinatorics}. The
framework introduced in this book gives a language to translate
combinatorial conditions between combinatorial classes into equations
relating the associated generating functions. This is what is called the
\emph{symbolic method} in combinatorics. Later, we can treat these
equations as relations between analytic functions. This point of view
gives the possibility to use complex analysis techniques to obtain
information about the combinatorial classes. This is the origin of the
term \emph{analytic combinatorics}.
\\
\paragraph{\textbf{The symbolic method}.} For a set $\mA$ of objects, let $|\cdot|$ be an application (called \emph{size})
from $\mA$ to $\mathbb{N}$. We assume that the number of elements in $\mA$
with a fixed size is always finite. A pair $(\mA,|\cdot|)$ is called a
\emph{combinatorial class}. Under these assumptions, we define the formal
power series (called the \emph{generating function} or \emph{GF}
associated with the class) $\gA(x)=\sum_{a\in
\mA}x^{|a|}=\sum_{n=0}^\infty a_n x^n$. Conversely, we write
$a_{n}=[x^n]\gA(x)$. The \emph{symbolic method} provides a direct way to
translate combinatorial constructions between combinatorial classes into
equations between GFs. The constructions we use in this work and their
translation into the language of GFs are shown in Table~\ref{tab:1}.

\begin{table}[htb]
\begin{center}
\begin{tabular}{c c|c}
  \textbf{Construction} &  & \textbf{GF} \\\hline
  Union & $\mA\cup\mB$ & $\gA(x)+\gB(x)$ \\
  Product & $\mA\times\mB$ & $\gA(x)\gB(x)$ \\
  Sequence & $\Seq{\mA}$ & $\frac{1}{1-\gA(x)}$ \\
  Pointing & $\mA^\bullet$ & $x\frac{\partial}{\partial x}\gA(x)$\\
\end{tabular}\bigskip
\caption{Constructions and translations into GFs.\label{tab:1}}
\end{center}
\end{table}

The union $\mA \cup \mB$ of $\mA$ and $\mB$ refers to the disjoint union
of the classes. The cartesian product $\bA \times \mB$ of $\mA$ and $\mB$
is the set $\{(a,b): a\in \mA,\, b\in \mB\}$. The sequence $\Seq{\mA}$ of
a set $\mA$ corresponds to the set $\mE\cup\mA \cup
\left(\mA\times\mA\right) \cup \left(\mA\times\mA\times \mA\right) \cup
\ldots$, where $\mE$ denotes the empty set. At last, the pointing operator
$\mA^{\bullet}$ of a set $\mA$ consists in pointing one of the atoms of
each element $a\in \mA$. Notice that in the sequence construction, the
expression $\mE\cup\mA \cup \left(\mA\times\mA\right) \cup
\left(\mA\times\mA\times \mA\right) \cup \dots$ translates into
$\sum_{k=0}^{\infty}\gA(x)^k$, which is a sum of a geometric series. In
the case of pointing, note also that $x\frac{\partial}{\partial
z}\gA(x)=\sum_{n>0}na_n x^{n}$. 
\\
\paragraph{\textbf{Singularity analysis}.}The study of the asymptotic growth of the coefficients of GFs can be obtained by considering GFs as complex functions
 analytic around $z=0$. This is the main idea of analytic combinatorics. The growth behavior of the coefficients depends only on the smallest positive singularity of the GF. Its \emph{location} provides the \emph{exponential growth} of the coefficients, and its \emph{behavior} gives the \emph{subexponential growth} of the coefficients.

More concretely, for real numbers $R>\rho>0$ and $0<\phi<\pi/2$, let
$\Delta_\rho(\phi, R)$ be the set $\{z \in \mathbb{C} : |z|<R,\,
z\neq\rho,\, |\mathrm{Arg}(z-\rho)|>\phi\}$. We call a set of this type a
\emph{dented domain} or a \emph{domain dented} at $\rho$. Let $\gA(z)$ and
$\gB(z)$ be GFs whose smallest singularity is the real number $\rho$. We
write $\gA(z)\sim _{z\rightarrow \rho}\gB(z)$ if $\lim_{z\rightarrow
\rho}\gA(z)/\gB(z)=1$. We obtain the asymptotic expansion of $[z^n]\gA(z)$
by \emph{transfering} the behavior of $\gA(z)$ around its singularity from
a simpler function $\gB(z)$, from which we know the asymptotic behavior of
their coefficients. This is the main idea of the so-called \emph{Transfer
Theorems} developed by Flajolet and Odlyzko~\cite{FlaOdl}. These results
allows us to deduce asymptotic estimates of an analytic function using its
asymptotic expansion near its dominant singularity. In our work we use a
mixture of Theorems VI.1 and VI.3
from~\cite{FlajoletSedgewig:analytic-combinatorics}:
\begin{prop}[Transfer Theorem]\label{thm:transfer}
If $\gA(z)$ is analytic in a dented domain $\Delta=\Delta_\rho(\phi,R)$,
where $\rho$ is the smallest singularity of $\gA(z)$, and
$$\gA(z) \underset{z\in\Delta, z \rightarrow \rho}{\sim} c\cdot \left(1-\frac{z}{\rho}\right)^{-\alpha}+O\left(\left(1-\frac{z}{\rho}\right)^{-\alpha+\gamma}\right),$$
for $\alpha\not\in\{0,-1, -2,\ldots\}$, and $\gamma>0$ then
\begin{equation}\label{eq:transfer}
a_n\ = c\cdot
\frac{n^{\alpha-1}}{\Gamma(\alpha)}\cdot\rho^{-n}\left(1+O(n^{-\gamma})\right),\end{equation}
where $\Gamma$ is the Gamma function:
$\Gamma(u)=\int_{0}^{\infty}t^{u-1}e^{-t}dt$.
\end{prop}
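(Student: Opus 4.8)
The plan is to split the two terms on the right of the hypothesis and extract coefficients separately, using linearity of $[z^n]$; a preliminary rescaling $z\mapsto \rho z$ reduces everything to the case $\rho=1$, since $a_n=\rho^{-n}[z^n]\gA(\rho z)$ and $\gA(\rho z)$ is analytic in a dented domain at $1$ with the same local expansion, the factor $\rho^{-n}$ being the only effect. Assume then $\rho=1$. For the main term $c(1-z)^{-\alpha}$, the exact identity $[z^n](1-z)^{-\alpha}=\binom{n+\alpha-1}{n}=\Gamma(n+\alpha)/(\Gamma(\alpha)\Gamma(n+1))$ together with the ratio estimate $\Gamma(n+\alpha)/\Gamma(n+1)=n^{\alpha-1}(1+O(1/n))$ (Stirling) gives $[z^n]\,c(1-z)^{-\alpha}=c\,n^{\alpha-1}/\Gamma(\alpha)\,(1+O(1/n))$; equivalently, and in a form that foreshadows the error estimate, one evaluates this coefficient by Cauchy's formula on a Hankel-type contour approaching $z=1$ at distance $\sim 1/n$, where the substitution $z=1+t/n$ produces a Hankel integral for $1/\Gamma(\alpha)$. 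This is Theorem~VI.1 of~\cite{FlajoletSedgewig:analytic-combinatorics}. The condition $\alpha\notin\{0,-1,-2,\ldots\}$ is exactly what makes this nondegenerate: at a non-positive integer, $(1-z)^{-\alpha}$ is a polynomial, consistently with $1/\Gamma(\alpha)=0$ there.

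For the error term, write $\gA=c(1-z)^{-\alpha}+h$, so $h$ is analytic in $\Delta=\Delta_1(\phi,R)$ and $|h(z)|\le K|1-z|^{-\alpha+\gamma}$ near $z=1$ in $\Delta$. Since $h$ is also analytic at $0$, Cauchy's formula gives $[z^n]h=\frac{1}{2\pi i}\oint_{\mC_n}h(z)z^{-n-1}\,dz$ after deforming the small circle around $0$ (through the region of analyticity $\Delta$) to a contour $\mC_n\subset\Delta$ built from four pieces: a small arc of radius $1/n$ around $z=1$; two straight segments leaving it along rays making an angle strictly between $\phi$ and $\pi/2$ with the positive real axis (this is the only place the shape of $\Delta$ is used, to keep $\mC_n$ inside $\Delta$); and an arc of a fixed circle $|z|=R'$, $1<R'<R$, closing the contour. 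On the small arc, $|1-z|=1/n$, $|z^{-n-1}|=O(1)$ and the length is $O(1/n)$, contributing $O(n^{\alpha-\gamma-1})$. On each segment, parametrizing $z=1+(u/n)e^{i\psi}$ with $u$ from $1$ to order $n$, one has $|z^{-n-1}|=O(e^{-cu})$ for some $c=c(\psi)>0$ and $|1-z|^{-\alpha+\gamma}=(u/n)^{-\alpha+\gamma}$, so the contribution is $O(n^{\alpha-\gamma-1})$ because $\int_1^\infty u^{-\alpha+\gamma}e^{-cu}\,du<\infty$. On the outer arc, $|z^{-n-1}|\le (R')^{-n-1}$ is exponentially small. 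Hence $[z^n]h=O(n^{\alpha-\gamma-1})$, which is the content of Theorem~VI.3 of~\cite{FlajoletSedgewig:analytic-combinatorics}.

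Adding the two contributions and restoring $\rho^{-n}$ yields $a_n=c\,\frac{n^{\alpha-1}}{\Gamma(\alpha)}\rho^{-n}(1+O(1/n))+O(n^{\alpha-\gamma-1}\rho^{-n})=c\,\frac{n^{\alpha-1}}{\Gamma(\alpha)}\rho^{-n}(1+O(n^{-\gamma}))$, where the $O(1/n)$ is absorbed into $O(n^{-\gamma})$ (shrinking $\gamma$ to $\min(\gamma,1)$ if necessary); the hypothesis $\gamma>0$ is precisely what makes the remainder of strictly smaller polynomial order. I expect the main obstacle to be the contour estimate for $h$: one must commit to a concrete $\mC_n$ that provably stays inside the dented domain (the role of the angle $\phi$), and then bound the rectilinear parts, for which the uniform decay $|z^{-n}|=O(e^{-cu})$ along the rays is the one slightly delicate point; the rest is routine bookkeeping.
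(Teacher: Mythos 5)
Your argument is correct and is essentially the paper's own route: the paper proves this proposition simply by invoking Theorems~VI.1 and VI.3 of Flajolet--Sedgewick, and your two steps (exact binomial/Hankel asymptotics for the term $c(1-z/\rho)^{-\alpha}$, plus the contour estimate inside the dented domain giving $[z^n]h=O(n^{\alpha-\gamma-1}\rho^{-n})$ for the remainder) are precisely the proofs of those two theorems. Your caveat that the $O(1/n)$ from the main term requires reading $\gamma$ as $\min(\gamma,1)$ is a fair reading of the statement and is harmless for the paper's use of it, which always has $\gamma=1/2$.
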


\section{Non-crossing partitions on surfaces with boundary}\label{sec:enumeration}

In this section we introduce the precise definition of a non-crossing
partition on a surface with boundary. The notion of a non-crossing
partition on a general surface is not as simple as in the case of a disk,
and must be stated in terms of objects more general than maps. Our
strategy to obtain asymptotic estimates for the number of non-crossing
partitions on surfaces consists in showing that we can restrict ourselves
to the study of certain families of maps. More concretely, we show that
the study of non-crossing partitions is a particular case of the study of
hypermaps~\cite{cori}, which can be interpreted as bipartite maps. The
plan for this section is the following: in
Subsection~\ref{subsection:definitions-non-crossing-partition} we set up
our notation and we define a non-crossing partition on a general surface.
In Subsection~\ref{subsect:reduction-bipartite-maps} we show that we can
restrict ourselves to the study of bipartite maps in which vertices belong
to the boundary of the surface.

\subsection{Bipartite subdivisions and non-crossing partitions}\label{subsection:definitions-non-crossing-partition}

Let $\Sigma$ be a connected surface with boundary, and let
$\mathbb{S}^1_1,\mathbb{S}^1_2,\dots, \mathbb{S}^1_{\beta(\Sigma)}$ be the
connected components of the boundary of $\Sigma$.

A \emph{bipartite subdivision} $S$ of $\Sigma$ with $n$ vertices is a
decomposition of $\Sigma$ into zero-, one-, and two-dimensional open and
connected subsets, where the $n$ vertices lay on the boundary of $\Sigma$,
and there is a two-coloring (namely, using black and white colors) of the
two-dimensional regions, such that each vertex is incident (possibly more
than once) with a unique black two-dimensional region.  We use the
notation $A(S)$ to denote the set $A_{1}\cup \cdots \cup
A_{\beta(\Sigma)}$ of vertices of $S$.

\begin{figure}[htb]
\begin{center}
\scalebox{0.356 }{\input{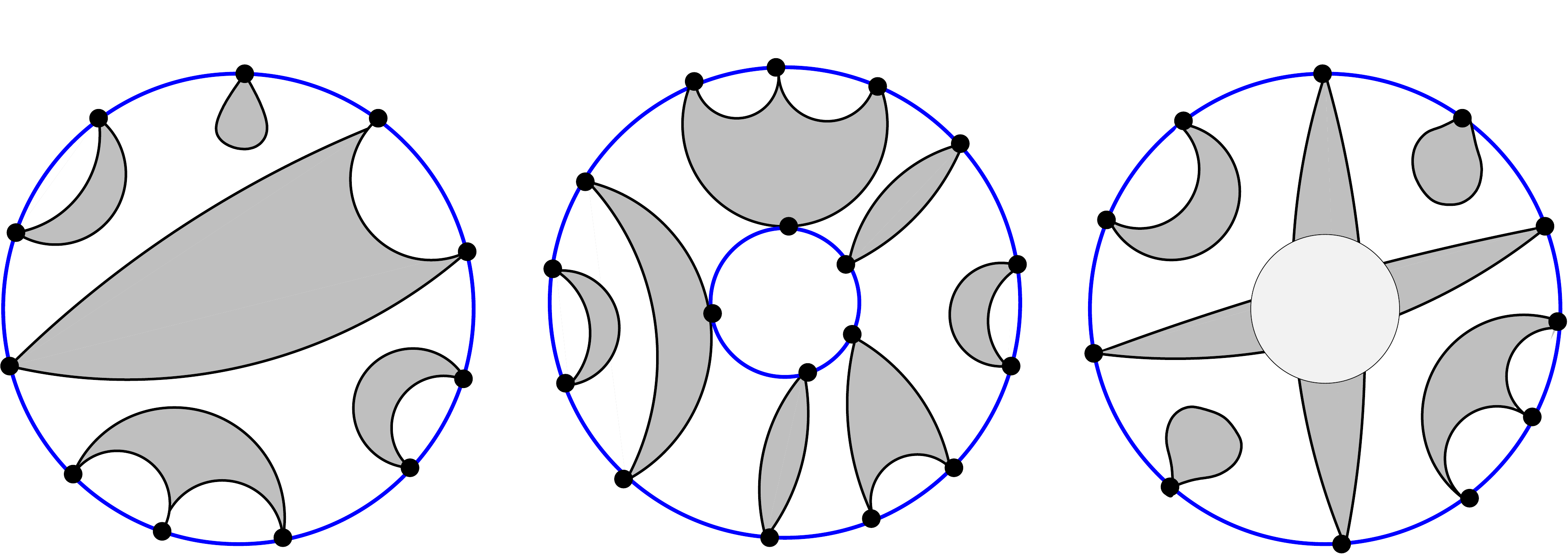_t}}
\caption{Geometric representation of non-crossing partitions on a disk, on
a cylinder, and on a M\"obius band.}\label{fig:intro-surfaces}
\end{center}
\end{figure}

For $1\leq r\leq \beta(\Sigma)$, let $A_r=\{1_{\{r\}},2_{\{r\}},\dots,
n_{\{r\}}\}$ be the set of vertices on $\mathbb{S}^1_r$, i.e.,
$A_{r}=A(S)\cap \mathbb{S}^{1}_{r}$. Vertices on each boundary are labeled
in counterclockwise order, and satisfy the property that
$|A_1|+|A_2|+\dots+|A_{\beta(\Sigma)}|=n$. In particular, boundary
components are distinguishable.
Observe that an equivalent way to label these vertices is distinguishing
on each boundary component an edge-root, whose ends are vertices
$1_{\{r\}}$ and $2_{\{r\}}$.

In general, bipartite subdivisions are not maps: two-dimensional subsets
could not be homeomorphic to open disks. Black faces on a bipartite
subdivision are called \emph{blocks}. A block of size $k$ is
\emph{regular} if it is incident with exactly $k$ vertices and it is
contractible (i.e., homeomorphically equivalent to a disk).
A bipartite subdivision is \emph{regular} if each block is regular. All
bipartite subdivisions are \emph{rooted}: every connected component of the
boundary of $\Sigma$ is edge-rooted in counterclockwise order. We denote
by $\mmS_{\Sigma}(n)$ and $\mR_{\Sigma}(n)$ the set of general and regular
bipartite subdivisions of $\Sigma$ with $n$ vertices, respectively.
Observe that the total number of vertices is distributed among all the
components of the boundary of $\Sigma$. In particular, it is possible that
a boundary component is not incident with any vertex. See
Figure~\ref{fig:non-crossing} for examples of bipartite subdivisions. In
particular, the darker blocks in the first bipartite subdivision are not
regular.

\begin{figure}[h!]
\begin{center}
\scalebox{0.375 }{\input{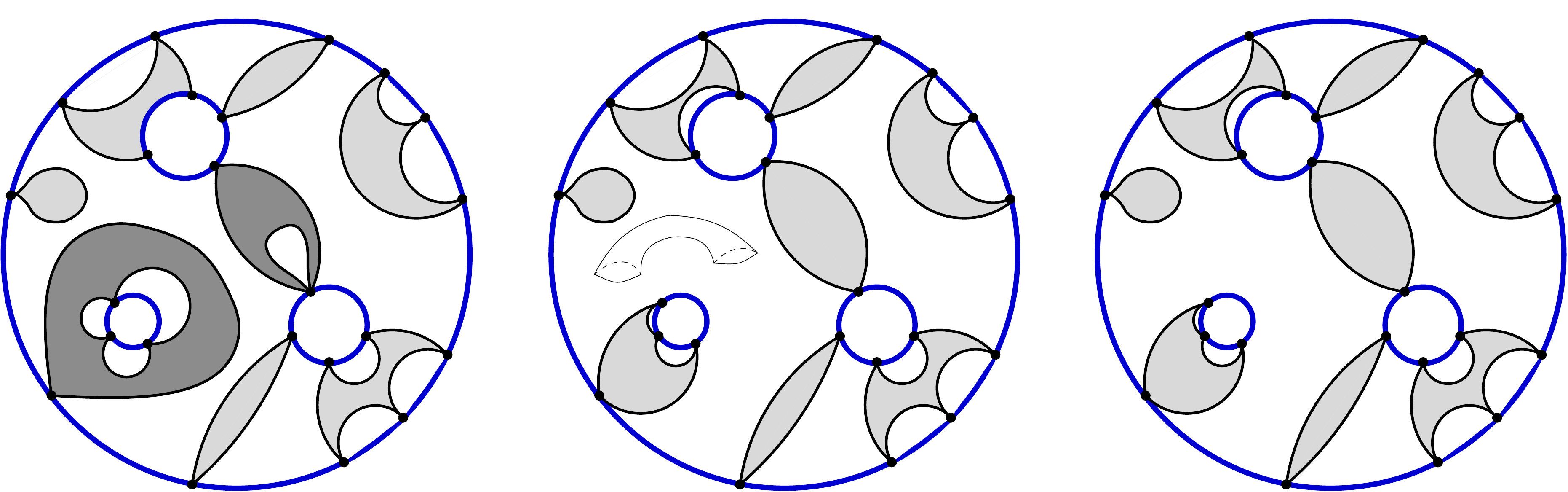_t}}
\end{center}
\caption{Three bipartite subdivisions $S_{1},S_{2}$ and $S_{3}$. $S_{2}$
is regular but not reducible while $S_{3}$ is irreducible.}
\label{fig:non-crossing}
\end{figure}

Let $S$ be a bipartite subdivision $S$ of $\Sigma$ with $n$ vertices and
let $X_{1},\ldots,X_{r}$ the set ${\mathcal{S}}$ be its blocks. Clearly,
these blocks define the partition $\pi_{\Sigma}\left(S\right)=\{X_{1}\cap
A,\ldots,X_{r}\cap A\}$ of the vertex set  $A=A_1 \cup \dots \cup
A_{\beta(\Sigma)}$. We say that a partition of $A$ is {\em non-crossing}
if  it is equal to $\pi_{\Sigma}\left(S\right)$  for some  bipartite
subdivision $S$ of $\Sigma$.   A non-crossing partition is said to be
\emph{regular} if it arises from a regular bipartite subdivision. Observe
that this definition generalizes the notion of a non-crossing partition on
a disk. We define $\Pi_\Sigma(n)$ as the set of non-crossing partitions of
$\Sigma$ with $n$ vertices and we set $C_{\Sigma}=|\Pi_\Sigma(n)|$. Notice
that this definition of $\Pi_{\Sigma}$ is equivalent to the one we gave in
the introduction. In the rest of the paper we  adopt the new definition as
this will simplify the presentation of our results and proofs.
%

\subsection{Reduction to the map framework}\label{subsect:reduction-bipartite-maps}
In this subsection we show that we can restrict ourselves to the study of
bipartite maps in which vertices belong to the boundary of the surface.
Later, this reduction will allow to study non-crossing partitions in the
context of map enumeration.

Let $\Sigma_1$ and $\Sigma_2$ be surfaces with boundary. We write
$\Sigma_2 \subset \Sigma_1$ if there exists a continuous injection
$i:\Sigma_2\hookrightarrow \Sigma_1$ such that $i(\Sigma_2)$ is
homeomorphic to $\Sigma_2$. If $S$ is a bipartite subdivision of
$\Sigma_2$ and $\Sigma_2 \subset \Sigma_1$, then the injection $i$ induces
a bipartite subdivision $i(S)$ on $\Sigma_1$ such that
$\pi_{\Sigma_2}(S)=\pi_{\Sigma_1}\left(i(S)\right)$. Roughly speaking, all
bipartite subdivisions on $\Sigma_2$ can be realized on a surface
$\Sigma_1$ which contains $\Sigma_2$. One can write then that
$\Pi_{\Sigma_2}(n)\subseteq \Pi_{\Sigma_1}(n)$ if $\Sigma_2\subset
\Sigma_1$, and then it holds that
$|\Pi_{\Sigma_2}(n)|\leq|\Pi_{\Sigma_1}(n)|$. This proves the trivial
bound $C(n) \leq |\Pi_{\Sigma}(n)|$ for all choices of $\Sigma$.

As the following lemma shows, regularity is conserved by injections of
surfaces.
\begin{lemma}\label{lemma:regularitat}
Let $M_1$ be a regular bipartite subdivision of $\Sigma_1$, and let
$\Sigma_1 \subset \Sigma$. Then $M_1$ defines a regular bipartite
subdivision $M$ over $\Sigma$ such that
$\pi_{\Sigma_1}\left(M_1\right)=\pi_{\Sigma}(M)$.
\end{lemma}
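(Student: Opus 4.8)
The plan is to make explicit the induced bipartite subdivision promised by the discussion preceding the lemma, and then to check that regularity survives the construction. Fix a continuous injection $i:\Sigma_1\hookrightarrow\Sigma$ realizing $\Sigma_1\subset\Sigma$; since $\Sigma_1$ is compact, $i(\Sigma_1)$ is closed in $\Sigma$ and $i$ is a homeomorphism onto it, and we may assume (as is implicit in the preceding discussion, and needed for the induced object to be a bipartite subdivision of $\Sigma$) that the vertices of $M_1$, which lie on $\partial\Sigma_1$, are sent into $\partial\Sigma$. I would define $M$ by transporting every cell of $M_1$ through $i$ and then declaring each connected component of the open set $\Sigma\setminus i(\Sigma_1)$ to be a single white two-dimensional region of $M$. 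Since bipartite subdivisions, unlike maps, do not require their two-dimensional regions to be disks (as stressed right after their definition), these leftover components are legitimate regions; colouring them all white guarantees that each vertex of $M$ is still incident with a \emph{unique} black region --- the image under $i$ of its black block in $M_1$ --- so $M$ is a bipartite subdivision of $\Sigma$. The $0$- and $1$-cells of $M$ are exactly the images of those of $M_1$, so $A(M)=i(A(M_1))$ and no new vertex appears. Two vertices of $M$ lie in the same class of $\pi_\Sigma(M)$ precisely when they are incident with the same black block; since $i$ is a bijection between vertices (resp. black blocks) of $M_1$ and of $M$ that respects incidence, $\pi_\Sigma(M)$ is identified with $\pi_{\Sigma_1}(M_1)$.

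It remains to check that $M$ is regular, i.e. that each of its blocks is a regular block. The blocks of $M$ are exactly the sets $i(X)$ for $X$ a block of $M_1$: the transported white regions stay white, and the components of $\Sigma\setminus i(\Sigma_1)$ were declared white, so no new black region is created. Fix a block $X$ of $M_1$ of size $k$. Since $M_1$ is regular, $X$ is contractible and its closure meets $A(M_1)$ in exactly $k$ points. Because $i$ is a homeomorphism onto the closed subset $i(\Sigma_1)$, the closure in $\Sigma$ of $i(X)$ equals $i(\overline X)$; hence $\overline{i(X)}$ is homeomorphic to a closed disk, $i(X)$ is contractible, and $\overline{i(X)}\cap A(M)=i(\overline X\cap A(M_1))$ has exactly $k$ elements. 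Since $i$ is a homeomorphism onto its image, the size of $i(X)$ as a block of $M$ equals that of $X$, namely $k$. Thus $i(X)$ is a regular block, $M$ is a regular bipartite subdivision of $\Sigma$, and $\pi_{\Sigma_1}(M_1)=\pi_\Sigma(M)$.

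I expect the only genuine difficulty to be topological bookkeeping around the embedding: one must be sure that moving from $\Sigma_1$ to $\Sigma$ does not merge a black block with the complement, split it, or change which vertices lie on its closure. All of this is controlled by the single fact that $i$ is a homeomorphism onto the \emph{closed} subset $i(\Sigma_1)$, which makes closures --- and hence contractibility and vertex-incidences --- transport verbatim, while the new regions attach to $M_1$ only along its $1$-skeleton and are kept white, so they interact with the colouring condition in the harmless way above. A minor case to dispose of is a boundary component of $\Sigma_1$ that carries no vertex and is mapped into the interior of $\Sigma$: its arcs then become interior edges of $M$ flanked by white complementary regions, which is consistent with the definition and affects no block.
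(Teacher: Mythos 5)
Your proof is correct and follows essentially the same route as the paper's: transport $M_1$ through the embedding $i$, use that $i$ is a homeomorphism onto its image so blocks remain contractible with the same vertex incidences, and conclude regularity and equality of the induced partitions. The paper's own argument is just a terser version of this; your extra bookkeeping (colouring the complementary components white, checking the unique-black-region condition) fills in details the paper leaves implicit but does not change the approach.
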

\begin{proof} Let $i:\Sigma_1 \hookrightarrow \Sigma$ be the corresponding injective application, and consider $M=i\left(M_1\right)$. In particular, a block $\pi$ of $M_1$ is topologically equivalent to the block $i(\pi)$: $i$ is a homeomorphism between $\Sigma$ and $i(\Sigma)$. Hence $i(\pi)$ is an open contractible set and $M$ is regular.
\end{proof}
The following proposition allows us to reduce the problem to the study of
regular bipartite subdivisions.
\begin{lemma} \label{lemma:only-regular}
Let $S\in \mmS_{\Sigma}(n)$ be a bipartite subdivision of $\Sigma$ and let
$\pi_\Sigma(S)$ be the associated non-crossing partition on $\Sigma$.
Then, there exists a regular bipartite subdivision $R\in\mR_{\Sigma}(n)$
such that $\pi_{\Sigma}(R)=\pi_{\Sigma}(S)$.
\end{lemma}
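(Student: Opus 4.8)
The plan is to start from an arbitrary bipartite subdivision $S\in\mmS_\Sigma(n)$ and repair its blocks one at a time until all of them are regular, while never changing the induced partition $\pi_\Sigma(S)$ of the vertex set $A$. Recall that a block $X$ of size $k$ (i.e.\ with $|X\cap A|=k$) fails to be regular for one of two reasons: either $X$ is not contractible (as a topological subspace of $\Sigma$ it has nontrivial topology, e.g.\ a handle or a hole, or it touches more than one boundary component), or $X$ is contractible but is incident with its $k$ vertices a total of more than $k$ times (the boundary walk of $X$ visits some vertex repeatedly, so $X$ ``pinches'' the boundary of $\Sigma$). In both cases the key observation is that only the set $X\cap A$ matters for $\pi_\Sigma(S)$, so we are free to shrink $X$ to any smaller connected closed set that still meets $A$ in exactly the same points, as long as the result remains a legal block of a bipartite subdivision (in particular it must stay disjoint from the other blocks).

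First I would handle non-contractibility. Given a non-contractible block $X$, pick a simple arc or cycle inside $X$ that realizes the topological obstruction, and use it to cut $X$ into pieces; one can retract $X$ onto a contractible neighbourhood of the convex-hull-like structure spanned by the $k$ vertices $X\cap A$ inside $X$. More carefully: inside $X$ (which is an open connected $2$-dimensional piece with the $k$ marked boundary points on $\partial\Sigma$) there is a contractible closed sub-disk $D\subseteq \overline X$ containing all $k$ marked points on its boundary; replace $X$ by the interior of $D$ and recolour the complement $\overline{X}\setminus D$ white (merging it with the adjacent white regions, subdividing further if necessary to keep all pieces open disks or at least legal two-dimensional regions). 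This does not disturb any other block, since the other blocks were disjoint from $X$ and we only made $X$ smaller. Doing this for every non-contractible block yields a bipartite subdivision in which every block is contractible, with the same induced partition.

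Next I would make each contractible block \emph{regular} in the counting sense, i.e.\ incident with its $k$ vertices exactly $k$ times. If a contractible block $X$ of size $k$ is incident with some vertex $v$ more than once, its boundary walk passes through $v$ repeatedly; split $X$ at $v$ (and at each such repeated vertex) into several contractible pieces, each incident with each of its vertices exactly once. This is the usual ``cutting a polygon at a repeated corner'' operation and is purely local near $\partial\Sigma$; it replaces one block by a bounded number of smaller contractible blocks, partitions $X\cap A$ into the corresponding groups — but wait, that would change $\pi_\Sigma(S)$. So instead of splitting, I keep $X$ connected but push it off the repeated vertex: retract the boundary walk of $X$ slightly into the interior of $\Sigma$ away from $v$ except along a single chosen approach, so that after the retraction $X$ is still one connected contractible block meeting $A$ in the same $k$ points but now touching each exactly once, the surplus incidences having been absorbed into the adjacent white region. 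One has to check the retraction can be done consistently at all repeated vertices simultaneously and that it keeps $X$ disjoint from the neighbouring blocks; since the blocks are closed and pairwise disjoint there is room to perform the retraction within a small enough collar of $\partial\Sigma$. Iterating over all blocks produces the desired regular bipartite subdivision $R\in\mR_\Sigma(n)$ with $\pi_\Sigma(R)=\pi_\Sigma(S)$.

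The main obstacle is the second step: unlike the non-contractibility reduction, which genuinely only shrinks a block and so is automatically compatible with everything else, ``un-pinching'' a block at a repeated vertex must be done \emph{without} splitting the block (which would refine the partition) and \emph{without} colliding with the other, possibly also-pinched, blocks sharing that vertex. The delicate point is therefore a simultaneous local normal-form argument in a collar neighbourhood of $\partial\Sigma$: one must show that the finitely many blocks incident with a given vertex $v$ can all be retracted off the multiple-incidence configuration at once, each remaining connected and contractible and the whole family remaining pairwise disjoint, and that one such block among them ends up incident to $v$ while the rest become disjoint from $v$ — exactly matching the combinatorics of the partition block that contains the label $v$. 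Everything else (the topological facts that a finite $2$-complex retracts onto a contractible subcomplex carrying prescribed boundary points, and that recolouring the discarded regions white yields a legal bipartite subdivision) is routine.
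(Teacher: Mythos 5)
Your proposal is correct in substance, but it handles the non-contractible blocks by a genuinely different route than the paper. The paper never shrinks a non-contractible block in place: it cuts the surface itself along a non-contractible cycle contained in the block, pastes one or two disks along the cut, repeats (at most $-\chi(\Sigma)$ times) until all blocks are contractible, and then, having landed on a simpler surface $\Sigma_1\subset\Sigma$ carrying a regular subdivision, transports the result back to $\Sigma$ via Lemma~\ref{lemma:regularitat}; for contractible blocks it uses two local operations, ``joining boundaries'' (when the block's boundary has several components) and ``cutting a vertex'' (when a vertex is incident more than once). Your second step is exactly the paper's cutting-a-vertex operation, but your first step --- replacing a non-contractible block by a closed disk inside its closure that still carries all of its marked vertices, and recolouring the rest white --- replaces both the joining-boundaries operation and the surface surgery: it keeps the ambient surface $\Sigma$ fixed throughout, so you never need Lemma~\ref{lemma:regularitat} nor the $-\chi(\Sigma)$ termination count, which is arguably cleaner (and sidesteps the paper's implicit issue that a separating cycle could split a block). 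To make your disk-existence claim precise, say it as: choose pairwise internally disjoint arcs in the open block from an interior basepoint to the $k$ vertices (one access per vertex) and take a small closed regular neighbourhood of this star; this is a disk meeting $\partial\Sigma$ only near the $k$ vertices, each appearing once on its boundary --- note that, done this way, it already yields regular incidence and your second step becomes almost vacuous. Finally, the ``main obstacle'' you flag (simultaneously un-pinching several blocks sharing a vertex) does not arise: by the definition of a bipartite subdivision each vertex is incident with a \emph{unique} black region, so the retraction at a vertex only ever involves one block, exactly as in the paper's cutting-a-vertex argument.
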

\begin{proof} Each bipartite subdivision has a finite number of blocks. For each block we will apply a finite
number of transforms in order to change it into a regular block, without
changing the associated non-crossing partition. We consider two cases
according to whether the block studied is contractible or not.

Let $f$ be a contractible block of $S$. Suppose that the boundary of $f$
consists of more than one connected component. We define the operation of
\emph{joining boundaries} as follows: let $l$ be a path that joins a
vertex $v$ in one component of the boundary of $f$ with a vertex $u$ in
another component of the boundary. This path exists because $f$ is a
connected and open subset of $\Sigma$. Consider also two paths $l_1, l_2$
that join these two vertices around the initial path $l$, as illustrated
in Figure~\ref{fig:join-cut}. Note that these paths $l_1$ and $l_2$ also
exist since we are dealing with open subsets. We define the new block
$f_1$ as the one obtained from the initial block $f$ by deleting the face
defined by $l_1$ and $l_2$ which contains $l$ (see the leftmost part of
Figure~\ref{fig:join-cut} for an example). Let $s_1$ be the resulting
bipartite subdivision. Observe that the number of connected components of
the boundary of $f_1$ is the same as for $f$ minus one. We can apply this
argument over $f$ as many times as the number of components of the
boundary of $f$ is strictly greater than one. At the end, we obtain a
bipartite subdivision with the same induced non-crossing partition, such
that the block derived from $f$ has exactly one boundary component.

Suppose now that the boundary of the block $f$ has a single component, but
it is not simple. Let $v$ be a vertex incident $p>1$ times with $f$. In
this case we define the operation of \emph{cutting a vertex} as follows:
consider the intersection of a small ball of radius $\epsilon > 0$
centered at $v$ with the block $f$, namely $B_{\epsilon}(v) \cap f$.
Observe that $\left(B_{\epsilon}(v) \cap f\right)\backslash \{v\}$ has
exactly $p$ connected components. We define the new block by deforming
$p-1$ of these components in such a way that they do not intersect the
boundary of $\Sigma$. Next, we paste the vertex $v$ to the unique
component which has not been deformed (see the rightmost part of
Figure~\ref{fig:join-cut} for an example). Then the resulting bipartite
subdivision has the same associated non-crossing partition, and $v$ is
incident with the corresponding block exactly once. Applying this argument
for each vertex of $f$ we get a block with a single simple boundary.

\begin{figure}[h!]
\begin{center}
\includegraphics[scale=.057]{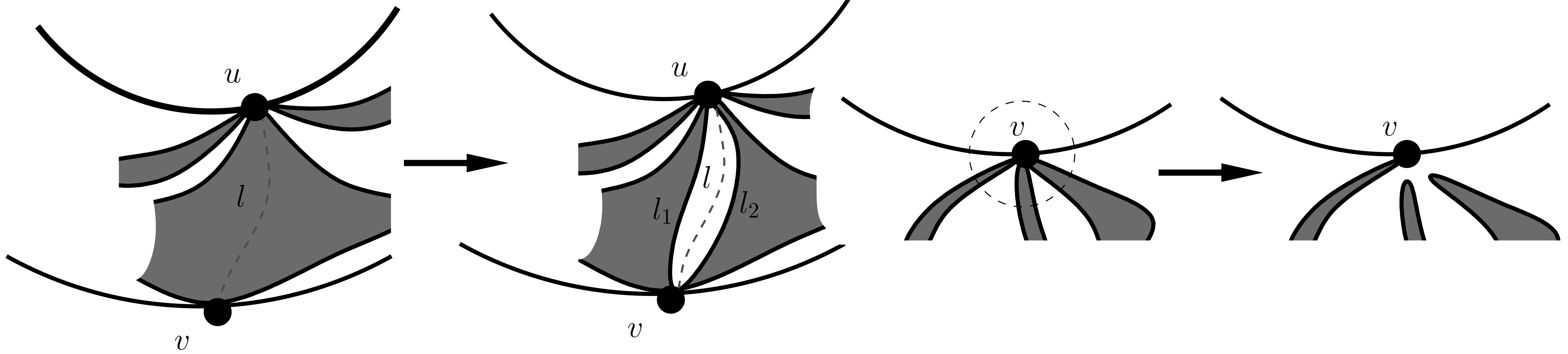}
\end{center}
\bigskip \caption{The operations of joining boundaries and cutting vertices.}
\label{fig:join-cut}
\end{figure}

Summarizing, from each contractible block $f$ of $S$ we construct a new
block $f'$ which is incident with the same vertices as $f$.

To conclude, suppose now that $f$ is an non-contractible block of $S$. Let
$\mathbb{S}^1_f$ be a non-contractible cycle contained in $f$. We cut the
surface along this cycle. We paste either a disk or a pair of disks along
the border depending on whether $\mathbb{S}^1_f$ is one- or two-sided.
This operation either increases the number of connected components or
decreases the genus of the surface.

Observe that the number of times we need to apply this operation is
bounded by $-\chi(\Sigma)$; in particular, it is finite. At the end, after
converting each block to a contractible one, all blocks are contractible
and the resulting surface (possibly with many connected components) is
$\Sigma_1 \subset \Sigma$. The resulting bipartite subdivision $M'$ on
$\Sigma_1$ is regular (since all the blocks are regular), and then by
Lemma~\ref{lemma:regularitat} there exists a regular bipartite subdivision
$R$ over $\Sigma$ such that
$\pi_{\Sigma}\left(R\right)=\pi_{\Sigma_1}\left(M'\right)$, as claimed.
\end{proof}

Notice that we just proved the following.
\begin{eqnarray}
|\Pi_{\Sigma}(n)|\leq |\mathcal{R}_{\Sigma}(n)|.\label{eooj}
\end{eqnarray}

We say that a bipartite subdivision $M$ is \emph{irreducible} in
$\Sigma_1$ if the associated non-crossing partition $\pi_{\Sigma_1}(M)$ is
regular and all its white faces are contractible.
%
%
%
In this case, we also say that the non-crossing partition
$\pi_{\Sigma_1}(M)$ is irreducible. We denote by $\mathcal{P}_{\Sigma}(n)$
the set of irreducible bipartite subdivisions. Clearly, the following
holds.
\begin{eqnarray}
|\mathcal{P}_{\Sigma}(n)| \leq |\mathcal{R}_{\Sigma}(n)|. 
\label{mltt}
\end{eqnarray}

The following lemma is a basic consequence of the previous discussions,
and allows us to reduce our study to the enumeration in the context of
maps.

\begin{lemma}\label{lemma:irreducibility}
Let $M$ be an irreducible bipartite subdivision of $\Sigma$. Then the
two-dimensional regions of $M$ are all contractible (hence, faces).
\end{lemma}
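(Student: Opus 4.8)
The plan is to unwind the definition of irreducibility and to check the two colour classes of two-dimensional regions of $M$ separately. Recall that a bipartite subdivision decomposes $\Sigma$ into zero-, one-, and two-dimensional connected pieces and that, as noted right after bipartite subdivisions are introduced, the \emph{only} obstruction to such an object being a map is that some two-dimensional region fails to be homeomorphic to an open disk — the pieces of dimension $0$ and $1$ are already points and open arcs. Moreover, in the terminology of this paper ``contractible'' is used to mean ``homeomorphic to a disk'' (see the definition of a regular block). Hence the statement reduces to showing that every two-dimensional region of $M$ is contractible, after which ``(hence, faces)'' is immediate.

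First I would deal with the white regions: by the very definition of an irreducible bipartite subdivision, all white faces of $M$ are contractible, so there is nothing to prove there. Then the black regions, that is, the blocks of $M$: since an irreducible bipartite subdivision is in particular regular, each block of $M$ is a regular block, and a regular block is contractible by definition. Combining the two, every two-dimensional region of $M$ is contractible, hence an open disk; together with the fact that the cells of dimension $0$ and $1$ are already points and arcs, this exhibits $M$ as a genuine map on $\Sigma$ whose faces are exactly its two-dimensional regions, which is the assertion.

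I expect the only point needing care to be a bookkeeping one, namely making sure that ``contractible'' really is the sole condition distinguishing the two-dimensional regions of a bipartite subdivision from the faces of a map, so that the parenthetical ``(hence, faces)'' is justified; this is where one should be explicit about the conventions fixed when bipartite subdivisions were defined. A secondary subtlety is the phrasing of irreducibility: if one reads it as requiring only that the \emph{partition} $\pi_\Sigma(M)$ be regular (realizable by \emph{some} regular subdivision) rather than that $M$ itself be regular, then the contractibility of the blocks of $M$ is no longer built in and would have to be recovered by rerunning the contractible-block reductions in the proof of Lemma~\ref{lemma:only-regular} (joining boundaries, cutting vertices) while checking that no white disk is destroyed, supplemented by an Euler-characteristic comparison (via Lemma~\ref{lemma:particio}) with a regular realization of $\pi_\Sigma(M)$ to rule out a genuinely non-contractible block. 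Under the reading consistent with the bound $|\mathcal{P}_\Sigma(n)|\le|\mathcal{R}_\Sigma(n)|$ stated just above the lemma, this extra step is unnecessary and the proof is the two-paragraph argument above.
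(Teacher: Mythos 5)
Your argument is correct, but it takes a genuinely different route from the paper on the one point where the paper does actual work. You dispose of the white regions by fiat, reading the definition of irreducibility as already granting that the white faces of $M$ are contractible; the paper instead treats exactly this case as the nontrivial one: it supposes a white region is not homeomorphic to an open disk, picks a non-contractible cycle $\mathbb{S}^1$ inside it, cuts $\Sigma$ along it to obtain a surface $\Sigma'\subset\Sigma$ in which $M$ is induced, and declares this a contradiction with irreducibility. In other words, the paper implicitly understands ``irreducible'' as a minimality condition (the subdivision cannot be induced on a proper subsurface), which is what gives the lemma content; under your literal reading the white case is vacuous and all the content sits in the blocks, which you settle via regularity --- and that part matches the paper, whose proof opens by invoking Lemma~\ref{lemma:only-regular} precisely to reduce to the white two-dimensional regions. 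Your closing paragraph shows you noticed the definitional ambiguity, and the cut-along-a-cycle-plus-Euler-characteristic argument you sketch there is essentially the paper's own cutting argument, just aimed at the blocks rather than the white faces. So: your proof is sound given the definition as literally stated, and it buys brevity; the paper's version buys robustness, since its topological cutting step proves the contractibility of the white regions rather than assuming it, which is what one needs if irreducibility is taken in the minimality sense that the rest of Section~\ref{subsect:reduction-bipartite-maps} (and the contradiction claimed in the paper's proof) suggests.
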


\begin{proof} From Lemma~\ref{lemma:only-regular},
we only need to deal with white two-dimensional regions. For a white face
whose interior is not homeomorphic to an open disk, there exists a
non-contractible cycle $\mathbb{S}^1$. Cutting along $\mathbb{S}^1$ we
obtain a surface $\Sigma'$ such that $\Sigma' \subset \Sigma$ and $M$ is
induced in $\Sigma'$, a contradiction. As a conclusion, all faces are
contractible.
\end{proof}

The above lemma says that irreducible bipartite subdivisions define
bipartite maps. In the next section we reduce our study to the family of
irreducible bipartite subdivisions. This permits us to upper-bound
$|\mathcal{P}_{\Sigma}(n)|$ instead of dealing with the more complicated
task of upper-bounding $|\mathcal{R}_{\Sigma}(n)|$. The reason why this
also gives an asymptotic bound for $|\Pi_{\Sigma}(n)|$ is that the
subfamily $\mathcal{P}_{\Sigma}(n)$ provides the main contribution to the
asymptotic estimates for $\mathcal{R}_{\Sigma}(n)$. Therefore~\eqref{mltt}
can be seen, asymptotically, as an equality, and, that way, the result
follows from~\eqref{eooj}.

%


\section{Upper bounds for non-crossing partitions on surfaces}\label{sec:upper-bounds}

The plan for this section is the following: in
Subsection~\ref{apen:3-enum} we introduce families of plane trees that
arise by duality on non-crossing partitions on a disk. These combinatorial
structures are used in Subsection~\ref{section:enumeration} to obtain a
tree-like structure which provides a way to obtain asymptotic estimates
for the number of irreducible bipartite subdivisions of $\Sigma$ with $n$
vertices, $|\mathcal{P}_{\Sigma}(n)|$. These asymptotic estimates are
found in Subsection~\ref{apen:3.5-enum} for irreducible bipartite
subdivisions.  Finally, we prove in Subsection~\ref{apen:4-enum} that the
number of irreducible bipartite subdivisions is asymptotically equal to
the number of bipartite subdivisions, hence the estimate obtained in
Subsection~\ref{section:enumeration} is an upper bound for the number of
non-crossing partitions on surfaces. All previous steps are summarized in
Subsection~\ref{subsection:final-result}.

\subsection{Planar constructions} \label{apen:3-enum}
The dual map of a non-crossing partition on a disk is a tree, which is
called the (non-crossing partition) tree associated with the non-crossing
partition. This tree corresponds to the notion of dual map for surfaces
with boundary introduced in Subsection~\ref{sec:prelim:embedded}. Recall
that vertices of degree one are called the \emph{dangling leaves} of the
tree. Vertices of the tree are called \emph{block} vertices if they are
associated with a block of the non-crossing partition. The remaining
vertices are either \emph{non-block} vertices or danglings.  By
construction, all vertices adjacent to a block vertex are non-block
vertices. Conversely, each vertex adjacent to a non-block vertex is either
a block vertex or a dangling. Graphically, we use the symbols
$\blacksquare$ for block vertices, $\square$ for non-block vertices and
$\circ$ for danglings. Non-crossing partitions trees are rooted: the root
of a non-crossing partition tree is defined by the root of the initial
non-crossing partition on a disk. The block vertex which carries the role
of the root vertex of the tree is the one associated with the block
containing vertex with label $2$ (or equivalently, the end-vertex of the
root). See Figure~\ref{fig:partition-tree} for an example of this
construction.

\begin{figure}[h!]
\begin{center}
\includegraphics[scale=1.5]{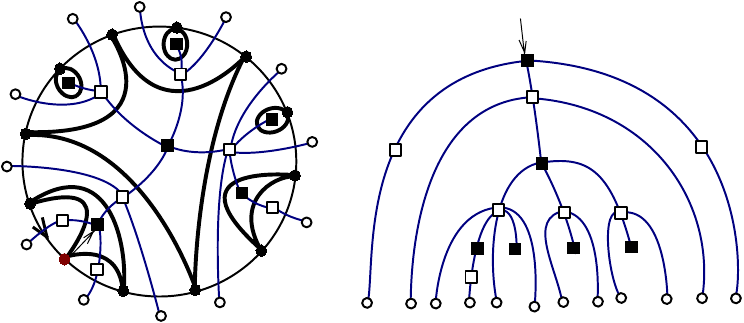}
\caption{A non-crossing partition on a disk and the associated
non-crossing partition tree.\label{fig:partition-tree}}
\end{center}
\end{figure}

Let $\mathcal{T}$ be the set of non-crossing partitions trees, and let
$\nT=\nT(z,u)=\sum_{n,m\geq 0} t_{n,m} z^n u^m$ be the corresponding
generating function. The variable $z$ marks danglings and $u$ marks block
vertices. We use an auxiliary family $\mathcal{B}$, defined as the set of
trees which are rooted at a non-block vertex. Let
$\nB=\nB(z,u)=\sum_{n,m\geq 0}b_{n,m} z^n u^m$ be the associated
generating function. The next lemma gives the exact enumeration of
$\mathcal{T}$ and $\mathcal{B}$. In particular, this lemma implies the
well-known Catalan numbers for non-crossing partitions on a disk.

\begin{lemma}\label{lemma:planar-trees}
The number of non-crossing trees counted by the number of danglings and
block vertices is enumerated by the generating function
\begin{equation}\label{eq:gen-catalan-function}
\nT(z,u)=\frac{1-z(1-u)-\sqrt{(z(1-u)-1)^2-4zu}}{2zu}.
\end{equation}
Furthermore, $\nB(z,u)=z\nT(z,u)$.
\end{lemma}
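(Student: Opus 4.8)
The plan is to set up a system of functional equations for $\nT$ and $\nB$ by decomposing non-crossing partition trees at their root, and then solve. First I would recall the structure: a non-crossing partition tree is rooted at a block vertex (the one carrying label $2$), its neighbours are non-block vertices, neighbours of non-block vertices are block vertices or danglings, and danglings have degree one. So I would describe $\mathcal{T}$ as a block vertex (contributing a factor $u$) to which is attached a sequence of ``children'', each child being a subtree hanging from a non-block vertex; and I would describe $\mathcal{B}$ as a non-block vertex to which is attached a sequence of children, each child being either a dangling (contributing $z$) or a block vertex that itself roots a tree of type $\mathcal{T}$. The subtlety of rooting — which half-edge/side is distinguished, and that the root block vertex's own attachment to its parent non-block side is already fixed — has to be tracked so that the sequences are genuine (linearly ordered) sequences in the sense of Table~\ref{tab:1}, and I expect this bookkeeping to be where one has to be careful.

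Concretely, I would write the two equations
\begin{align}
\nT(z,u) &= u\cdot \frac{1}{1-\nB(z,u)}, \label{eq:planT}\\
\nB(z,u) &= z + \frac{1}{1-\nT(z,u)} - 1 = \frac{\nT(z,u)}{1-\nT(z,u)} + z, \label{eq:planB}
\end{align}
or some minor variant thereof depending on exactly how the root-edge of the tree is incorporated (for instance one of the sequences may have to be nonempty, or one factor of $z$ or $u$ may sit outside the sequence). The first equation says a $\mathcal{T}$-tree is a block vertex with a sequence of $\mathcal{B}$-subtrees hanging off it; the second says a $\mathcal{B}$-tree is a non-block vertex with a sequence of children, each child being a dangling ($z$) or a $\mathcal{T}$-tree. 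The claimed identity $\nB = z\nT$ strongly suggests the correct normalization is that $\mathcal{B}$ is really ``a dangling-marked copy of $\mathcal{T}$'', i.e. that the root of a $\mathcal{B}$-tree behaves combinatorially like a $\mathcal{T}$-tree with one extra dangling atom; I would reverse-engineer the precise decomposition from this hint if needed.

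Then I would eliminate $\nB$ using $\nB = z\nT$ (either proving it along the way from the decomposition, or positing it and verifying consistency), substitute into the equation for $\nT$, and obtain a single quadratic equation in $\nT$. Explicitly, plugging $\nB = z\nT$ into \eqref{eq:planT} gives $\nT(1 - z\nT) = u$, i.e.
\begin{equation*}
zu\,\nT^2 - \nT + u = 0 \quad\Longrightarrow\quad z\,\nT^2 - \big(1 - z(1-u)\big)\nT + 1 = 0
\end{equation*}
after the appropriate regrouping — and I would solve this quadratic, choosing the branch with $\nT(0,u)$ finite (the root of the square root taken with the minus sign so that $\nT$ is a power series with nonnegative coefficients), which yields exactly \eqref{eq:gen-catalan-function}. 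The main obstacle, as I said, is getting the functional equations exactly right: making sure the rooting convention turns each set of children into an honest \emph{sequence} rather than a cycle, handling the degenerate cases (empty trees, the partition of a single point, whether danglings can appear at the root), and confirming that the two marking variables $z$ and $u$ land on the intended atoms so that the discriminant comes out as $(z(1-u)-1)^2 - 4zu$. Once the equations are pinned down, the algebra is routine; setting $u=1$ should recover the Catalan generating function as a sanity check.
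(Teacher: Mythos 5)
You have the first half of the decomposition right ($\mathcal{T}=\{\blacksquare\}\times\Seq{\mathcal{B}}$, i.e.\ $\nT=u/(1-\nB)$), but the part you propose to ``reverse-engineer'' is exactly the content of the lemma, and your candidates for it would fail. The paper's second specification is $\mathcal{B}=\{\circ\}\times\Seq{\mathcal{T}\times\{\circ\}}$: around a non-block vertex the danglings and the block-rooted subtrees alternate, beginning and ending with a dangling, so $\nB=z/(1-z\nT)$. Neither your written $\nB=z+\nT/(1-\nT)$ nor the $\Seq{\{\circ\}\cup\mathcal{T}}$ reading of your verbal description is a ``minor variant'' of this: eliminating $\nB$ from those systems gives quadratics whose discriminants are not $(z(1-u)-1)^2-4zu$. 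In the paper one eliminates $\nB$ between the two functional equations to obtain $z\nT^2+(z(1-u)-1)\nT+u=0$, and the identity $\nB=z\nT$ is then read off by solving the system for $\nB$ afterwards; it is an output of the decomposition, not an input.

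Second, the elimination you actually carry out is algebraically invalid and cannot produce \eqref{eq:gen-catalan-function}. Substituting the posited $\nB=z\nT$ into $\nT=u/(1-\nB)$ gives $z\nT^2-\nT+u=0$, not $zu\,\nT^2-\nT+u=0$, and no ``regrouping'' turns either of these into $z\nT^2-(1-z(1-u))\nT+1=0$: the three equations are pairwise inequivalent, and none of them has \eqref{eq:gen-catalan-function} as a root. The series solution of $z\nT^2-\nT+u=0$ is $\bigl(1-\sqrt{1-4zu}\bigr)/(2z)$, whose discriminant $1-4zu$ differs from the claimed $(z(1-u)-1)^2-4zu$; this is not a normalization discrepancy, and the two expressions agree only at $u=1$, so your proposed sanity check at $u=1$ would not catch it. Moreover, positing $\nB=z\nT$ and ``verifying consistency'' is circular with respect to the second assertion of the lemma. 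The missing idea is precisely the alternating structure of the children of a non-block vertex; once $\nB=z/(1-z\nT)$ is established, the remaining quadratic manipulation is the routine step you describe.
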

\begin{proof} We establish combinatorial relations between $\mathcal{B}$ and $\mathcal{T}$ from which we
deduce the result. Observe that there is no restriction on the number of
vertices incident with a given block. Hence the degree of every block
vertex is arbitrary. This condition is translated symbolically via the
relation
$$\mathcal{T}=\{\blacksquare\} \times \mathrm{Seq}\left(\mathcal{B}\right).$$
Similarly, $\mathcal{B}$ can be written in the form
$$\mathcal{B}=\{\circ\}\times\mathrm{Seq}\left(\mathcal{T}\times \{\circ\} \right).$$
These combinatorial conditions translate using Table~\ref{tab:1} into the
system of equations
$$\nT=\frac{u}{1-\nB},\ ,\ \ \nB= \frac{z}{1-z\nT}.$$
Substituting the expression of $\nB$ in the first equation, one obtains
that $\nT$ satisfies the relation $z\nT^2+(z(1-u)-1)\nT+u=0$. The solution
to this equation with positive coefficients
is~(\ref{eq:gen-catalan-function}). Solving the previous system of
equations in terms of $\nB$ brings $\nB=z \nT$, as claimed. \end{proof}

Observe that writing $u=1$ in $\nT$ and $\nB$ we obtain that
$\nT(z)=\nT(z,1)=\frac{1-\sqrt{1-4z}}{2z}$, and $\nB(z)=\nB(z,1)=z\nT(z)$,
deducing the well-known generating function for Catalan numbers.

We introduce another family of trees related to non-crossing partitions
trees, which we call \emph{double trees}. A double tree is defined in the
following way: consider a path where we concatenate block vertices and
non-block vertices. We consider the internal vertices of the path. A
double tree is obtained by pasting on every block vertex of the path a
pair of elements of $\mathcal{T}$ (one at each side of the path), and a
pair of elements of $\mathcal{B}$ for non-block vertices. We say that a
double tree is of type either $\blacksquare-\blacksquare$,
$\blacksquare-\square$, or $\square-\square$ depending on the ends of the
path. An example for a double tree of type $\blacksquare-\blacksquare$ is
shown in Figure~\ref{fig:double-tree}.

\begin{figure}[htb]
\begin{center}
\includegraphics[scale=0.33]{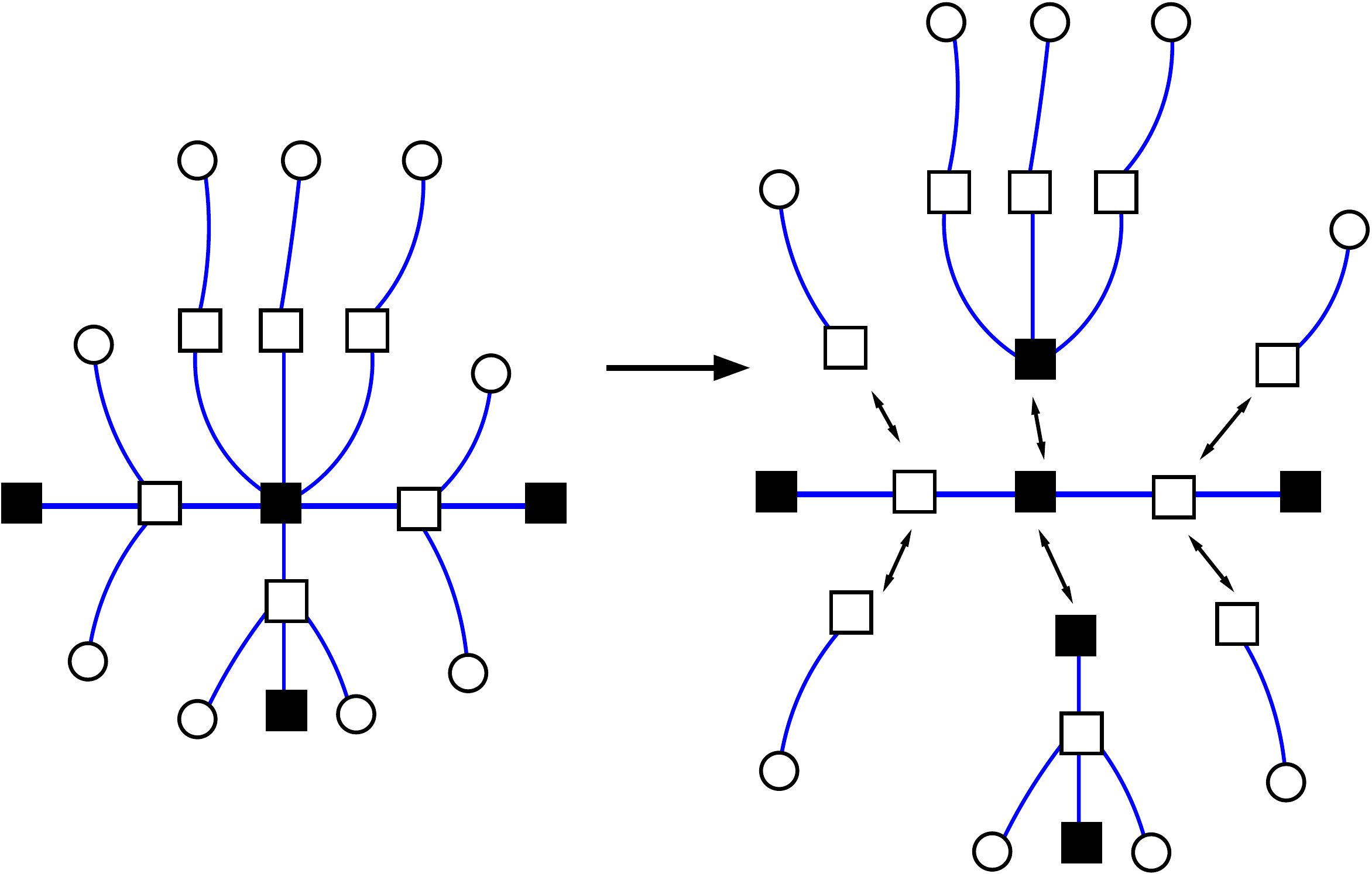}
\caption{A double tree and its decomposition. \label{fig:double-tree}}
\end{center}
\end{figure}

We denote these families by $\mathcal{T}_{\blacksquare-\blacksquare}$,
$\mathcal{T}_{\square-\blacksquare}$, and $\mathcal{T}_{\square-\square}$,
and the corresponding generating function by $\nT_{1}(z,u)=\nT_{1}$,
$\nT_{2}(z,u)=\nT_{2}$, and $\nT_{3}(z,u)=\nT_{3}$, respectively. Recall
that in all cases $z$ marks danglings and $u$ marks block vertices. A
direct application of the symbolic method provides a way to obtain
explicit expressions for the previously defined generating functions. The
decomposition and the GFs of the three families are summarized in
Table~\ref{table:double-trees}.

{\renewcommand{\arraystretch}{1.2}
\begin{table}[htb]
\begin{center}
\begin{tabular}{c|c|c|c}
  \textbf{Family} & \textbf{Specification}&\textbf{Development} & \textbf{Compact expression} \\\hline
$\mathcal{T}_{\square-\blacksquare}$ & $\mathrm{Seq}\left(\mathcal{B}^2 \times \mathcal{T}^2 \right)$            &$1+\frac{1}{u}\nB^2\nT^2+\frac{1}{u^2}\nB^4\nT^4+\dots$& $1/(1-\nT^2\nB^2/u)$ \\
$\mathcal{T}_{\blacksquare-\blacksquare}$ &  $\mathcal{B}^2\times \mathrm{Seq}\left(\mathcal{B}^2 \times \mathcal{T}^2 \right)$         & $\nB^2+\frac{1}{u}\nB^4\nT^2+\frac{1}{u^2}\nB^6\nT^4+\dots$ & $\nB^2/(1-\nT^2\nB^2/u)$ \\
$\mathcal{T}_{\square-\square}$ &  $\mathcal{T}^2\times
\mathrm{Seq}\left(\mathcal{B}^2 \times \mathcal{T}^2 \right)$
&$\frac{1}{u}\nT^2+\frac{1}{u^2}\nB^2\nT^4+\frac{1}{u^3}\nB^4\nT^6+\dots$
&
$\frac{1}{u}\nT^2/(1-\nT^2\nB^2/u)$\\
\end{tabular}\bigskip
\caption{GFs for double trees.\label{table:double-trees}}
\end{center}
\end{table}}

To conclude, the family of \emph{pointed} non-crossing trees
$\mathcal{T}^{\bullet}$ is built by pointing a dangling on each
non-crossing partition tree. In this case, the associated GF is
$\nT^{\bullet}=z \frac{\partial}{\partial z}\nT$. Similar definitions can
be done for the family $\mathcal{B}$. Pointing a dangling defines a unique
path between this distinguished dangling and the root of the tree.

\subsection{The scheme of an irreducible bipartite subdivision.}\label{section:enumeration}
In this subsection we generalize the construction of non-crossing
partition trees introduced in Subsection~\ref{apen:3-enum}. In order to
characterize it, we exploit the dual construction for maps on surfaces
(see Subsection~\ref{subs:maps-duality}). More concretely, for an element
$M\in\mathcal{P}_{\Sigma}(n)$, let $M^{*}$ be the dual map of $M$ on
$\overline{\Sigma}$. By construction, there is no incidence in $M^{*}$
between either pairs of block vertices or pairs of non-block vertices.

From $M^{*}$ we define a new rooted map (a root for each boundary
component of $\Sigma$) on $\overline{\Sigma}$ in the following way: we
start by deleting recursively vertices of degree one which are not roots.
Then we continue \emph{dissolving} vertices of degree two, that is,
replacing the two edges incident to a vertex of degree two with a single
edge. The resulting map has $\beta(\Sigma)$ faces and all vertices have
degree at least three (apart from root vertices, which have degree one),
and vertices of two colors (vertices of different colors could be
end-vertices of the same edge). The resulting map is called the
\emph{scheme associated} with $M$; we denote it by $\mathfrak{s}_M$.  See
Figure~\ref{fig:fig2} for an example of this construction.

\begin{figure}[h!]
\begin{center}
\includegraphics[scale=0.18]{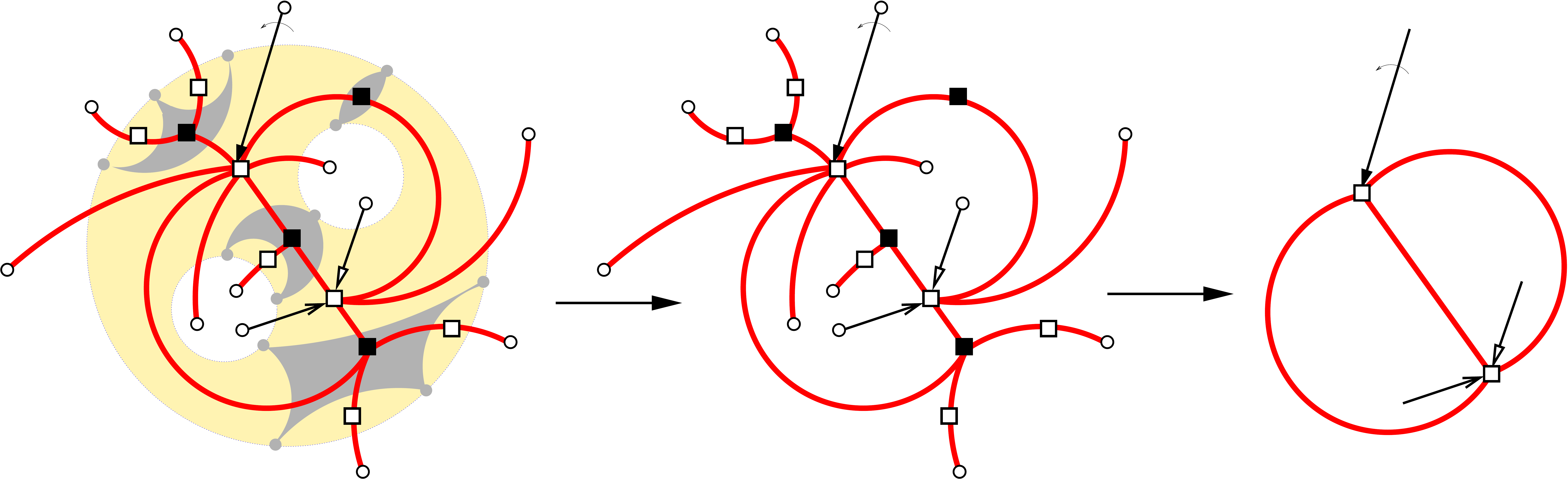}
\end{center}
\caption{The construction of the scheme of an element in
$\mathcal{P}_{\Sigma}$. We consider the dual of an irreducible bipartite
subdivision (leftmost figure). After deleting vertices of degree one
recursively and dissolving vertices of degree two, we obtain the
associated scheme (rightmost figure).\label{fig:fig2}}
\end{figure}

The previous decomposition can be constructed in the reverse way: duals of
irreducible bipartite subdivision are constructed from a generic scheme
$\mathfrak{s}$ in the following way.

\begin{enumerate}
\item For an edge of $\mathfrak{s}$ with both end-vertices of type $\blacksquare$, we paste a double tree of type $\blacksquare-\blacksquare$ along it. Similar operations are done for edges with end-vertices $\{\square,\blacksquare\}$ and $\{\square,\square\}$.
\item For vertex $v$ of $\mathfrak{s}$ of type $\blacksquare$ we paste $\dd(v)$ elements of $\mathcal{T}$ (identifying the roots of the trees with $v$), one on each corner of $v$. The same operation is done for vertices of $\mathfrak{s}$ of type $\square$.
\item We paste an element of $\mathcal{T}^{\bullet}$ along each one of the roots of $\mathfrak{s}$ (the marked leaf determines the dangling root).
\end{enumerate}

To conclude, this construction provides a way to characterize the set of
schemes. Indeed, if we denote by $\mathfrak{S}_\Sigma$ the set of maps on
$\overline{\Sigma}$ with $\beta(\Sigma)$ faces with a root on each face
and with vertices of two different colors (namely, vertices of type
$\blacksquare$ and $\square$), then $|\mathfrak{S}_\Sigma|$ is finite,
since the number of faces of each element in $\mathfrak{S}_\Sigma$ is
equal to $\beta(\Sigma)$. In fact, $\mathfrak{S}_\Sigma$ is the set of all
possible schemes: from an arbitrary element $\mathfrak{s}\in
\mathfrak{S}_\Sigma$ we can construct a map on $\overline{\Sigma}$ with
$\beta\left(\Sigma\right)$ faces by pasting double trees along each edge
of $\mathfrak{s}$ (according to the end-vertices of each edge). In other
words, given $M\in\mathcal{P}_{\Sigma}(n)$ and $\mathfrak{s}_M$, $M^{*}$
can be reconstructed by pasting on every edge of $\mathfrak{s}_M$ a double
tree, depending on the nature of the end-vertices of each edge of
$\mathfrak{s}_M$. See Figure~\ref{fig:dual2} for an example.

\begin{figure}[htb]
\begin{center}
\includegraphics[scale=0.19]{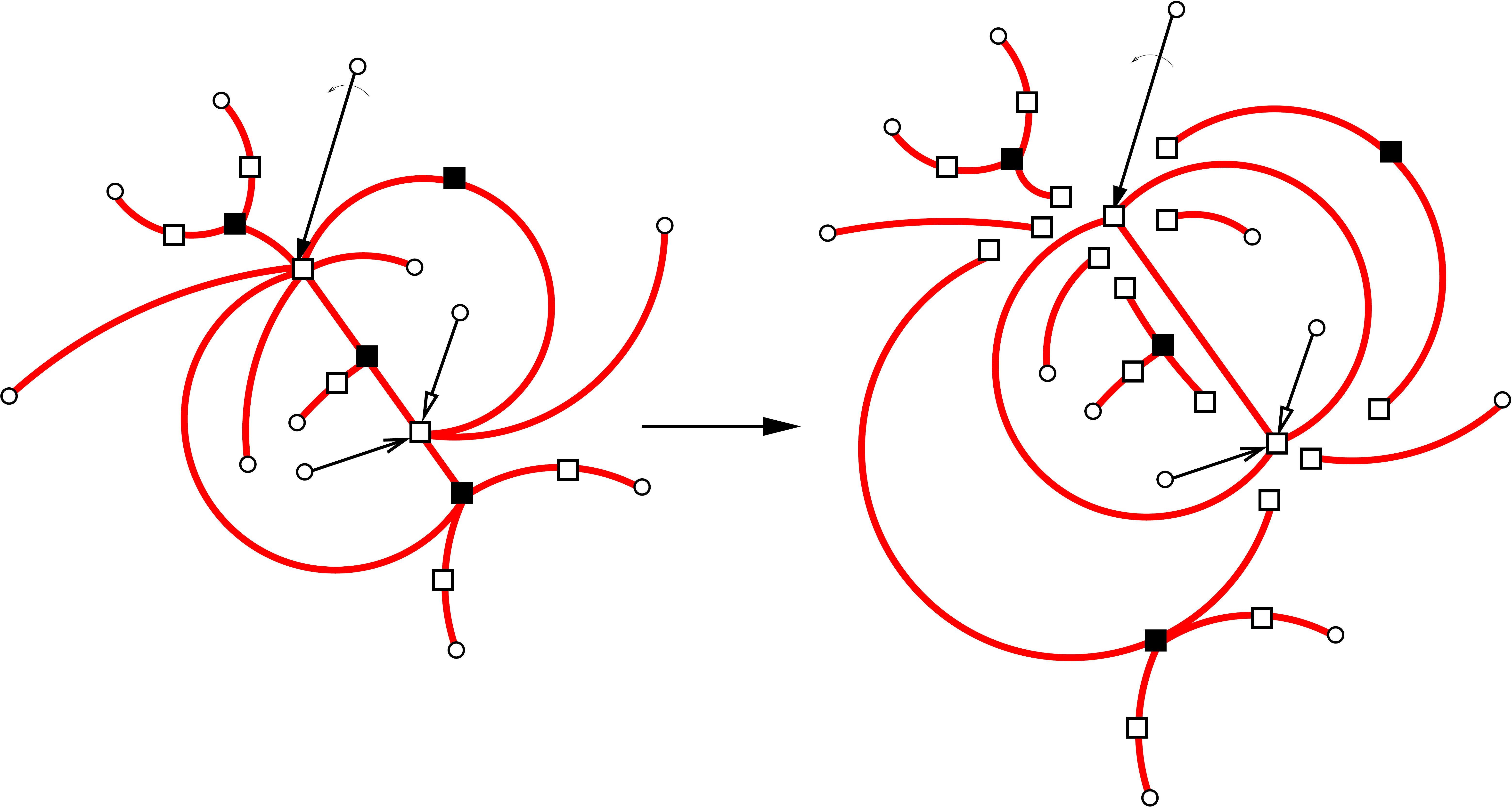}
\caption{The decomposition into bicolored trees and the associated
scheme.\label{fig:dual2} }
\end{center}
\end{figure}

\subsection{Asymptotic enumeration} \label{apen:3.5-enum}
The decomposition introduced in Subsection~\ref{section:enumeration} can
be exploited in order to get asymptotic estimates for
$|\mathcal{P}_{\Sigma}(n)|$, and consequently upper bounds for
$|\Pi_{\Sigma}(n)|$. In this subsection we provide estimates for the
number of irreducible bipartite subdivisions. We obtain these estimates
directly for the surface $\Sigma$, while the usual technique consists in
reducing the enumeration to surfaces of smaller genus, and returning back
to the initial one by topological ``pasting'' arguments. The main point
consists in exploiting tree structures of the dual graph associated with
an irreducible bipartite subdivision. The main ideas are inspired
by~\cite{BernardiRue:triangulations-higher-genus}, where the authors find
the asymptotic enumeration of simplicial decompositions of surfaces with
boundaries without interior points.

We use the notation and definitions introduced in
Subsection~\ref{apen:3-enum} (i.e., families of trees, double trees and
pointed trees, and the corresponding GFs), joint with the decomposition
introduced in Subsection~\ref{section:enumeration}. Let us now introduce
some extra notation.

We denote by $\mathcal{P}_\Sigma(n,m)$ the set of irreducible bipartite
subdivisions of $\Sigma$ with $n$ vertices and $m$ blocks. We write
$p_{n,m}^{\Sigma}$ for the cardinality of this set and
$\textbf{P}_{\Sigma}(z,u)=\sum_{n,m\geq 0}p_{n,m}^{\Sigma} z^n u^m$. Let
$p_{n}^{\Sigma}=\sum_{m\geq
0}p_{n,m}^{\Sigma}=[z^n]\textbf{P}_{\Sigma}(z,1)$. Let
$\mathfrak{s}\in\mathfrak{S}_\Sigma$. Denote by $v_1(\mathfrak{s})$ and
$v_2(\mathfrak{s})$  the set of vertices of type $\blacksquare$ and
$\square$ of $\mathfrak{s}$, respectively. Write $b(\mathfrak{s}),
w(\mathfrak{s})$ for the number of roots which are incident with a vertex
of type $\blacksquare$ and $\square$, respectively. In particular,
$b(\mathfrak{s})+w(\mathfrak{s})=\beta(\Sigma)$. Denote by
$e_{1}(\mathfrak{s})$ the number of edges in $\mathfrak{s}$ of type
$\blacksquare-\blacksquare$. We similarly define $e_2(\mathfrak{s})$ and
$e_3(\mathfrak{s})$ for edges of type $\square-\blacksquare$ and
$\square-\square$, respectively. Observe that
$e_1(\mathfrak{s})+e_2(\mathfrak{s})+e_3(\mathfrak{s})+b(\mathfrak{s})+w(\mathfrak{s})$
is equal to the number of edges of $\mathfrak{s}$, $e(\mathfrak{s})$. For
a vertex $v$ of $\mathfrak{s}$, denote by $r(\mathfrak{s})$ the number of
roots which are incident with it. Finally, denote by
$\mathfrak{C}_{\Sigma} \subset \mathfrak{S}_{\Sigma}$ the set of maps on
$\mathfrak{S}_{\Sigma}$ whose vertex degree is equal to three (namely,
cubic maps on $\overline{\Sigma}$ with $\beta(\Sigma)$ faces).

The decomposition discussed in Subsection~\ref{section:enumeration}
together with Proposition~\ref{thm:transfer} give the following:

\begin{lemma}\label{thm:assympt-enumeration-1}
Let $\Sigma$ be a surface with boundary. Then
\begin{equation}\label{eq:asympt-enumeration-2}
[z^n]\nP_\Sigma(z,1)=p_{n}^{\Sigma}=\frac{c(\Sigma)}{\Gamma\left(-3\chi(\Sigma)/2+\beta(\Sigma\right))}
\cdot n^{-3\chi(\Sigma)/2+\beta(\Sigma)-1} \cdot 4^n
\left(1+O\left(n^{-1/2}\right)\right),
\end{equation}
where $c(\Sigma)$ is a function depending only on $\Sigma$.
\end{lemma}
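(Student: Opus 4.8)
The plan is to compute the generating function $\nP_\Sigma(z,u)$ by summing over the finite set of schemes $\mathfrak{S}_\Sigma$, using the reverse construction from Subsection~\ref{section:enumeration}, and then to locate and analyze the dominant singularity. First I would fix a scheme $\mathfrak{s}\in\mathfrak{S}_\Sigma$ and translate the three pasting operations (double trees on edges, trees on corners, pointed trees on roots) into a product of the generating functions already computed in Table~\ref{table:double-trees} and Lemma~\ref{lemma:planar-trees}. Concretely, each edge of type $\blacksquare-\blacksquare$, $\square-\blacksquare$, $\square-\square$ contributes a factor $\nT_1,\nT_2,\nT_3$; each $\blacksquare$-vertex of degree $d$ contributes $\nT^{d}$ and each $\square$-vertex of degree $d$ contributes $\nB^{d}$ (one tree per corner); and each root contributes one pointed factor, $\nT^\bullet$ if the root-vertex is $\blacksquare$ and $\nB^\bullet$ if it is $\square$. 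Since $\sum_{v\in v_1(\mathfrak{s})}\dd(v)$ counts edge-endpoints at $\blacksquare$-vertices and similarly for $\square$, these exponents reorganize into a clean monomial in $\nT,\nB,\nT_1,\nT_2,\nT_3$ whose total exponent is governed by $e(\mathfrak{s})$ and $\beta(\Sigma)$; summing over the finitely many $\mathfrak{s}$ gives a closed form for $\nP_\Sigma(z,1)$.

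The second step is singularity analysis. Setting $u=1$, all of $\nT,\nB,\nT_1,\nT_2,\nT_3$ are algebraic with the same dominant singularity $\rho=1/4$ inherited from the Catalan series $\nT(z)=\frac{1-\sqrt{1-4z}}{2z}$: one has $\nT(z),\nB(z)\to$ finite values while $\nT-$(its limit)$\sim c\sqrt{1-4z}$, and the $\nT_i$ each blow up like a negative power of $\sqrt{1-4z}$ because the denominator $1-\nT^2\nB^2$ vanishes at $z=1/4$ (indeed $\nT(1/4)^2\nB(1/4)^2=1$). So the factor from a scheme behaves near $z=1/4$ like $\text{const}\cdot(1-4z)^{-a(\mathfrak{s})/2}$ where $a(\mathfrak{s})$ is a half-integer exponent I would read off: each $\nT_i$ edge gives $(1-4z)^{-1/2}$, each pointed root gives $(1-4z)^{-1/2}$, and the corner trees contribute only regular factors. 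Counting, $a(\mathfrak{s})=e(\mathfrak{s})+\beta(\Sigma)$ up to a shift, and one checks via Euler's formula applied to $\mathfrak{s}$ on $\overline\Sigma$ — using $\chi(\overline\Sigma)=\beta(\Sigma)+\chi(\Sigma)$ from Subsection~\ref{sec:prelim:topo}, $f(\mathfrak{s})=\beta(\Sigma)$, minimum degree three at non-root vertices, degree one at roots — that the maximal value of $-a/2$ over all schemes equals exactly $-3\chi(\Sigma)/2+\beta(\Sigma)$, attained precisely on the cubic schemes $\mathfrak{C}_\Sigma$. Applying the Transfer Theorem (Proposition~\ref{thm:transfer}) with $\alpha=-3\chi(\Sigma)/2+\beta(\Sigma)$, $\rho=1/4$, $\gamma=1/2$ yields~\eqref{eq:asympt-enumeration-2}, with $c(\Sigma)$ the sum of the singular coefficients coming from the dominant (cubic) schemes.

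The main obstacle I expect is the careful bookkeeping in the exponent count: one must verify that pasting trees on the corners of a vertex and pointed trees on the roots contributes no extra negative power of $(1-4z)$ beyond what the double-tree edges and the pointedness already give, and that the Euler-characteristic identity converts "number of edges of a scheme with minimum degree $3$ and $\beta(\Sigma)$ unit-degree roots on $\overline\Sigma$" into precisely $-3\chi(\Sigma)/2+\beta(\Sigma)$ as the extremal exponent. A secondary subtlety is confirming that $\nT(1/4)\nB(1/4)=1$ so that the $\nT_i$ genuinely have a pole-type (rather than branch-type) contribution of half-integer order after composition with $\sqrt{1-4z}$, and that $\alpha=-3\chi(\Sigma)/2+\beta(\Sigma)\notin\{0,-1,-2,\dots\}$ so the Transfer Theorem applies; since $\chi(\Sigma)\le 1$ and $\beta(\Sigma)\ge 1$ with the half-integer possibility when $\chi$ is odd, this needs a short case check, but in all cases $\alpha>0$ or $\alpha$ is a positive half-integer, so it is fine. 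The error term $O(n^{-1/2})$ is then just the standard next-order term in singularity analysis of algebraic functions with a square-root singularity.
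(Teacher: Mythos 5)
Your proposal is correct and follows essentially the same route as the paper's proof: express $\nP_\Sigma(z,1)$ as a finite sum over schemes of products of the tree, double-tree and pointed-tree generating functions, observe that every edge (including the root edges, via $\nT^\bullet,\nB^\bullet$) contributes a factor $(1-4z)^{-1/2}$ while the corner trees stay analytic, use Euler's formula to see that the cubic schemes maximize the edge count at $2\beta(\Sigma)-3\chi(\Sigma)$, and transfer the resulting singular expansion at $z=1/4$. The only small slip is the final claim that $\alpha=-3\chi(\Sigma)/2+\beta(\Sigma)$ is always positive: for the disk it equals $-1/2$, but since it is then a negative half-integer rather than a non-positive integer, Proposition~\ref{thm:transfer} still applies and the conclusion is unaffected.
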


\begin{proof} According to the decomposition introduced in Subsection~\ref{section:enumeration}, $\nP_\Sigma(z,u)$ can be written in the following form: for each $\mathfrak{s}\in \mathfrak{S}_{\Sigma}$, we replace edges (not roots) with double trees, roots with pointed trees, and vertices with sets of trees. More concretely,
\begin{eqnarray}\label{eq:sing-u}
\textbf{P}_{\Sigma}(z,u)=\sum_{\mathfrak{s}\in \mathfrak{S}_\Sigma} && u^{\left|v_1(\mathfrak{s})\right|} \nT_{1}^{e_1(\mathfrak{s})}\nT_{2}^{e_2(\mathfrak{s})}\nT_{3}^{e_3(\mathfrak{s})}\left(\frac{\nT}{u}\right)^{\sum_{x\in v_1(\mathfrak{s})}(\dd(x)-2r(x))} \times\\
&&\nB^{\sum_{y\in
v_2(\mathfrak{s})}(\dd(y)-2r(y))}\left(\frac{\nT^{\bullet}}{u}\right)^{B(\mathfrak{s})}\left(\nB^\bullet\right)^{W(\mathfrak{s})}.\nonumber
\end{eqnarray}
Observe in the previous expression that terms $\nT$ and $\nT^\bullet$
appear divided by $u$: blocks on the dual map are considered in the term
$u^{\left|v_1(\mathfrak{s})\right|}$, so we do not consider the root of
the different non-crossing trees.

To obtain the asymptotic behavior in terms of the number of danglings, we
write $u=1$ in Equation~\eqref{eq:sing-u}. To study the resulting GF, we
need the expression of each factor of Equation~\eqref{eq:sing-u} when we
write $u=1$; all these expressions are shown in Table~\ref{table:trees}.
This table is built from the expressions for $\nT$ and $\nB$ deduced in
Lemma~\ref{lemma:planar-trees} and the expressions for double trees in
Table~\ref{table:double-trees}.

{\renewcommand{\arraystretch}{1.2}
\begin{table}[htb]
\begin{center}
\begin{tabular}{c|l}

  \textbf{GF} & $\,\,\,\,\,\,\,\,\,\,\,\,\,\,\,\,\,\,\,\,\,\,\,\,\,\,\,\,\,\,\,\,\,\,\,\,\,\,\,\,\,\,\,\,\,\,\,\,\,\,\,\,\,\,\,\,\,\,$\textbf{Expression}  \\\hline
  $\nT_1(z,1)$ & $1/16(1-4z)^{-1/2}-1/8(1-4z)^{1/2}+1/16(1-4z)^{3/2}$  \\
  $\nT_2(z,1)$ & $1/4(1-4z)^{-1/2}+1/2+(1-4z)^{1/2}$ \\
  $\nT_3(z,1)$ & $z^2\left(1/16(1-4z)^{-1/2}-1/8(1-4z)^{1/2}+1/16(1-4z)^{3/2}\right)$  \\
  $\nT(z,1)$ & $(1-(1-4z)^{1/2})/(2z)$  \\
  $\nB(z,1)$ & $\left(1-(1-4z)^{1/2}\right)/2$  \\
  $\nT^\bullet(z,1)$ & $(1-4z)^{-1/2}/z-(1-(1-4z)^{-1/2})/(2z^2)$  \\
  $\nB^\bullet(z,1)$ & $(1-4z)^{-1/2}$  \\
\end{tabular}\bigskip
\caption{Univariate GF for all families of trees.\label{table:trees}}
\end{center}
\end{table}}

The GF in Equation~\eqref{eq:sing-u} is a finite sum (a total of
$|\mathfrak{S}_\Sigma|$ terms), so its singularity is located at $z=1/4$
(since each addend has a singularity at this point). For each choice of
$\mathfrak{s}$,
\begin{equation}\label{eq:vertices}
\nT(z,1)^{\sum_{x\in v_1(\mathfrak{s})}(\dd(x)-2r(x))}\nB(z,1)^{\sum_{y\in
v_2(\mathfrak{s})}(\dd(y)-2r(y))}=\sum_{n=0}^{f(\mathfrak{s})}
f_n(z)(1-4z)^{n/2},
\end{equation}
where the positive integer $f(\mathfrak{s})$ depends only on
$\mathfrak{s}$, $f_n(z)$ are functions analytic at $z=1/4$, and
$f_0(z)\neq 0$ at $z=1/4$. For the other multiplicative terms, we obtain
\begin{equation}\label{eq:edges}
\nT_{1}(z,1)^{e_1(\mathfrak{s})}\nT_{2}(z,1)^{e_2(\mathfrak{s})}\nT_{3}(z,1)^{e_3(\mathfrak{s})}\nT^{\bullet}(z,1)^{B(\mathfrak{s})}\nB^\bullet(z,1)^{W(\mathfrak{s})}=G_{\mathfrak{s}}(z)(1-4z)^{-\frac{e(\mathfrak{s})}{2}}+\dots,
\end{equation}
where $G_{\mathfrak{s}}(z)$ is a function analytic at $z=1/4$. The reason
for this fact is that each factor in Equation~\eqref{eq:edges} can be
written in the form $p(z)(1-4z)^{-1/2}+\dots$, where $p(z)$ is a function
analytic at $z=1/4$, and
$e_1(\mathfrak{s})+e_2(\mathfrak{s})+e_3(\mathfrak{s})+B(\mathfrak{s})+W(\mathfrak{s})$
is the total number of edges. Multiplying Expressions~\eqref{eq:vertices}
and~\eqref{eq:edges} we obtain the contribution of a map $\mathfrak{s}$ in
$\nP_{\Sigma}(z,1)$. More concretely, the contribution of a single map
$\mathfrak{s}$ to Equation~(\ref{eq:sing-u}) can be written in the form
$$g_\mathfrak{s}(z)(1-4z)^{-e(S)/2}+\dots,$$
where $g_\mathfrak{s}(z)$ is a function analytic at $z=1/4$. Looking
at~\eqref{eq:transfer} from Proposition~\ref{thm:transfer}, we deduce that
the maps giving the greatest contribution to the asymptotic estimate of
$p_k^{\Sigma}$ are the ones maximizing the value $e(\mathfrak{s})$.
Applying Euler's formula (recall that all maps in $\mathfrak{S}_\Sigma$
have $\beta(\Sigma)$ faces) on $\overline{\Sigma}$ gives that these maps
are precisely the maps in $\mathfrak{C}_{\Sigma}$. In particular, maps in
$\mathfrak{C}_{\Sigma}$ have $2\beta(\Sigma)-3\chi(\Sigma)$ edges. Hence,
the singular expansion of $\nP_\Sigma(z,1)$ at $z=1/4$ is
\begin{equation}
\nP_\Sigma(z,1) \sim_{z\rightarrow 1/4}
c(\Sigma)(1-4z)^{3\chi(\Sigma)/2-\beta(\Sigma)}\left(1+O((1-4z)^{1/2})\right),
\end{equation}
where $c(\Sigma)=\sum_{\mathfrak{s}\in
\mathfrak{C}_{\Sigma}}g_\mathfrak{s}(1/4)$. Applying
Proposition~\ref{thm:transfer} on this expression yields the result as
claimed.
\end{proof}

\subsection{Irreducibility vs reducibility}\label{apen:4-enum}

For conciseness, in this subsection we write
$$a(\Sigma)=\frac{c(\Sigma)}{\Gamma\left(-3\chi(\Sigma)/2+\beta(\Sigma\right))}$$
to denote the constant term which appears in
Equation~\eqref{eq:asympt-enumeration-2} from
Lemma~\ref{thm:assympt-enumeration-1}. By
Lemma~\ref{lemma:irreducibility}, for a non-irreducible bipartite
subdivision $M$ of $\mathcal{R}_\Sigma$, there is a non-contractible cycle
$\mathbb{S}^1$ contained in a white two-dimensional region of $R$.
Additionally, $M$ induces a regular bipartite subdivision on the surface
$\Sigma\backslash\mathbb{S}^1=\Sigma'$, which can be irreducible or not.
By Lemma~\ref{lemma:regularitat}, each element of $\mathcal{R}_{\Sigma'}$
defines an element of $\mathcal{R}_{\Sigma}$. To prove that irreducible
bipartite subdivisions over $\Sigma$ give the maximal contribution to the
asymptotic, we apply a double induction argument on the pair
$(\chi(\Sigma),\beta(\Sigma))$. The critical point is the initial step,
which corresponds to the case where $\overline{\Sigma}$ is the sphere. The
details are shown in the following lemma.

\begin{lemma}\label{prop:genus-0-induction}
Let $\Sigma$ be a surface obtained from the sphere by deleting $\beta$
disjoints disks. Then
$$|\mathcal{R}_\Sigma(n)\backslash \mathcal{P}_{\Sigma}(n) |=o\left(|\mathcal{P}_{\Sigma}(n)|\right).$$
\end{lemma}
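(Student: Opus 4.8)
The plan is to reduce the count of non-irreducible regular bipartite subdivisions of $\Sigma$ to counts of irreducible (equivalently, bipartite-map) objects on surfaces that are strictly simpler, and then invoke Lemma~\ref{thm:assympt-enumeration-1} to compare the polynomial exponents of growth. First I would fix the base case: here $\overline{\Sigma}$ is the sphere, so $\chi(\overline{\Sigma})=2$ and $\chi(\Sigma)=2-\beta$; the target exponent in Lemma~\ref{thm:assympt-enumeration-1} is $-3\chi(\Sigma)/2+\beta(\Sigma)-1 = -3(2-\beta)/2 + \beta - 1 = (5\beta)/2 - 4$. So $|\mathcal{P}_\Sigma(n)| \asymp a(\Sigma)\, n^{(5\beta)/2-4}\, 4^n$, up to the $(1+O(n^{-1/2}))$ factor. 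The goal is to show that every $M\in\mathcal{R}_\Sigma(n)\setminus\mathcal{P}_\Sigma(n)$ is accounted for by a family whose growth is $O(n^{(5\beta)/2-4-\delta}\,4^n)$ for some $\delta>0$; in fact I expect $\delta=1/2$ to suffice, and a half-integer loss is exactly what cutting along one cycle should produce.

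Next I would set up the reduction. By Lemma~\ref{lemma:irreducibility} and the discussion preceding it, a non-irreducible $M\in\mathcal{R}_\Sigma(n)$ has a non-contractible cycle $\mathbb{S}^1$ inside a white region; cutting along it and capping the new boundary components with disks yields a surface $\Sigma'$ with $\Sigma'\subset\Sigma$ on which $M$ induces a regular bipartite subdivision $M'$, and by Lemma~\ref{lemma:regularitat} every element of $\mathcal{R}_{\Sigma'}$ gives back an element of $\mathcal{R}_\Sigma$. Since $\overline{\Sigma}$ is the sphere, the cycle $\mathbb{S}^1$ must be separating (the sphere has no non-separating cycles), so cutting splits $\Sigma$ into two pieces; after capping, $\Sigma'$ is a disjoint union $\Sigma_1\sqcup\Sigma_2$ with $\beta(\Sigma_1)+\beta(\Sigma_2)=\beta+2$ (the original $\beta$ boundary circles distributed, plus one new circle on each side), both pieces again having spherical closure. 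The key bookkeeping point is that a regular bipartite subdivision of the disconnected surface $\Sigma_1\sqcup\Sigma_2$ with $n$ vertices total is (a bounded-multiplicity encoding of) a pair of subdivisions whose vertex counts sum to $n$, where on each piece one distinguished boundary circle carries \emph{zero} vertices (the capped circle); this constraint of an empty boundary component is what loses the exponent. Passing to irreducible pieces (by Lemma~\ref{lemma:only-regular} applied inductively on the simpler pieces, or directly since both closures are spheres and we can induct on $\beta$) and summing over the bounded number of choices of how to distribute boundary circles and of which white region / isotopy class the cut cycle came from, I get
$$|\mathcal{R}_\Sigma(n)\setminus\mathcal{P}_\Sigma(n)| \;\le\; C\!\!\sum_{\substack{\beta_1+\beta_2=\beta+2\\ n_1+n_2=n}} |\mathcal{R}_{\Sigma_{\beta_1}}^{\circ}(n_1)|\cdot |\mathcal{R}_{\Sigma_{\beta_2}}^{\circ}(n_2)|,$$
where $\Sigma_{\beta_i}$ denotes the sphere minus $\beta_i$ disks and the superscript $\circ$ records that one of its boundary circles is vertex-free.

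Then I would estimate the right-hand side. A vertex-free boundary component behaves, at the level of generating functions, like replacing a pointed tree $\nT^\bullet$ (which contributes $(1-4z)^{-1/2}$) by the corresponding un-pointed rooted object with no dangling, which is analytic at $z=1/4$ — so each empty boundary circle improves the singular exponent by $1/2$, i.e. multiplies the polynomial factor by $n^{-1/2}$. Concretely $|\mathcal{R}_{\Sigma_{\beta_i}}^{\circ}(n_i)| = O(n_i^{(5\beta_i)/2 - 4 - 1/2}\, 4^{n_i})$ by the same scheme analysis as in Lemma~\ref{thm:assympt-enumeration-1} (with the modified root factor), together with the inductive hypothesis that irreducible subdivisions dominate on the simpler pieces so that $|\mathcal{R}_{\Sigma_{\beta_i}}^{\circ}| \asymp |\mathcal{P}_{\Sigma_{\beta_i}}^{\circ}|$. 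A standard convolution estimate for sequences of the form $n^{a}4^n$ — namely $\sum_{n_1+n_2=n} n_1^{a_1}4^{n_1} n_2^{a_2}4^{n_2} = \Theta(n^{a_1+a_2+1}\,4^n)$ when $a_1,a_2>-1$, and $\Theta(n^{\max(a_1,a_2)}\,4^n)$ when one of them, say $a_2$, is $<-1$ — finishes the computation: with $a_i = (5\beta_i)/2 - 9/2$ and $\beta_1+\beta_2=\beta+2$ one gets exponent $a_1+a_2+1 = (5(\beta+2))/2 - 9 + 1 = (5\beta)/2 - 3$, which is strictly less than the target $(5\beta)/2 - 4$... wait — so in fact one must be more careful: the correct statement is that after capping, the genuinely constraining fact is that the \emph{two} new circles (one on each piece) are both vertex-free, giving total improvement $n^{-1}$ over the naive split, and I should re-examine whether a single convolution gains or loses. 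The honest form of the argument tracks exponents through the scheme formula directly: the contribution of the two pieces glued along $\mathbb{S}^1$ is governed by the product of their scheme generating functions with the gluing identifying the two capped roots, and this product has singular exponent one half-unit \emph{weaker} than that of the connected scheme for $\Sigma$ because a cubic scheme for a connected spherical-closure surface with $\beta$ faces has $2\beta-3\chi(\Sigma)=2\beta-3(2-\beta)=5\beta-6$ edges, whereas a disconnected cut-scheme has one fewer edge available at the cut (the dissolved cycle), hence exponent smaller by $1/2$.

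\textbf{Main obstacle.} The hard part will be this exponent bookkeeping: making precise, in the language of the scheme decomposition of Subsection~\ref{section:enumeration}, exactly how cutting along $\mathbb{S}^1$ changes the number of edges of the dominant (cubic) scheme and hence the power of $(1-4z)$ in the singular expansion — and verifying that the loss is strictly positive (a genuine $n^{-1/2}$ or better), uniformly over the finitely many topological types of $\Sigma_1\sqcup\Sigma_2$ and the finitely many isotopy classes of cut. Equivalently one must check that the convolution of the two pieces' asymptotics, \emph{with the empty-boundary corrections built in}, lands strictly below $a(\Sigma)\,n^{(5\beta)/2-4}\,4^n$; this is where a careless count gives the wrong sign, so the computation of the cubic-scheme edge count and the two vertex-free-root corrections must be done with care. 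Everything else — the topological reduction, the finiteness of the number of cases, and the convolution lemma for $n^a4^n$ — is routine.
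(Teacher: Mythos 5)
Your overall strategy (induction on $\beta$, cut along a separating non-contractible cycle inside a white region, cap with disks, and compare singular exponents of the two pieces against the connected exponent $5\beta/2-4$) is exactly the paper's, but the quantitative heart of the argument -- the exponent bookkeeping after the cut -- is wrong in your write-up and is precisely the step you leave unresolved. The problem is your boundary accounting: you keep the two cut circles as \emph{vertex-free boundary components}, so that $\beta(\Sigma_1)+\beta(\Sigma_2)=\beta+2$, and you posit that each empty boundary costs only a factor $n^{-1/2}$. That heuristic is false; a sanity check on the cylinder already kills it: there the cut pieces are just two disks, whose pairs are counted by $[z^n]\nT(z,1)^2=O(n^{-3/2}4^n)$, whereas your ``cylinder with one empty boundary, corrected by $n^{-1/2}$'' rule would predict $n^{1/2}4^n$ per piece and $n^{2}4^n$ after convolution, exceeding even $|\mathcal{P}_\Sigma(n)|\asymp n\cdot 4^n$. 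Accordingly your general computation lands at exponent $5\beta/2-3>5\beta/2-4$, you notice the sign is wrong, and neither of your proposed repairs (a total gain of $n^{-1}$, or ``one fewer scheme edge, hence exponent smaller by $1/2$'') gets you strictly below the target, since $5\beta/2-4$ and $5\beta/2-7/2$ are not $o(n^{5\beta/2-4})$. So as written the proof does not close.

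The paper's resolution is simpler: after cutting, the two new circles are capped by disks and are \emph{not} counted as boundary components at all (this is the convention of Lemma~\ref{lemma:only-regular} and Lemma~\ref{lemma:particio}), so $\beta(\Sigma_1)+\beta(\Sigma_2)=\beta$, and each piece is again a sphere minus $\beta_i$ disks. By the induction hypothesis one may replace $\mathcal{R}_{\Sigma_i}$ by $\mathcal{P}_{\Sigma_i}$, and the reducible subdivisions are then dominated by the coefficients of $\nP_{\Sigma_1}(z,1)\nP_{\Sigma_2}(z,1)$, whose singular exponent is $(3-5\beta_1/2)+(3-5\beta_2/2)=6-5\beta/2$, giving $O(n^{5\beta/2-7}4^n)$ -- a clean $n^{-3}$ below $|\mathcal{P}_\Sigma(n)|\asymp n^{5\beta/2-4}4^n$, with no need for any ``empty boundary'' correction. (The paper also treats $\beta=2$ separately as the base of this comparison, using $\nT(z,1)^2$ directly.) If you want to salvage your version with vertex-free boundaries, you would have to prove the correct singular exponent for subdivisions with a prescribed empty boundary component, which is exactly the analysis the capping convention lets you avoid.
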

\begin{proof} We proceed by induction on $\beta$. The case $\beta=1$ corresponds
to a disk. We deduced in Subsection~\ref{apen:3-enum} the exact expression
for $\nP_{\Sigma}(z,u)$ (see Equation (\ref{eq:gen-catalan-function})). In
this case the equality $|\mathcal{R}_\Sigma(n)|=|\mathcal{P}_{\Sigma}(n)|$
holds for every value of $n$. Let us consider now the case $\beta=2$,
which corresponds to a cylinder. From
Equation~\eqref{eq:asympt-enumeration-2}, the number of irreducible
bipartite subdivisions on a cylinder verifies
\begin{equation}\label{eq:asint-cilindre}
\left|\mathcal{P}_\Sigma(n)\right|=_{n\rightarrow \infty} a(\Sigma)\cdot
n\cdot 4^{n}\left(1+O\left(n^{-1/2}\right)\right).
\end{equation}
Let us calculate upper bounds for the number of non-irreducible bipartite
subdivisions on a cylinder. A non-contractible cycle $\mathbb{S}^1$ on a
cylinder induces a pair of non-crossing partitions on a disk (one for each
boundary component of this cylinder). The asymptotic in this case is of
the form $[z^n]\nT(z,1)^2=_{n\rightarrow \infty} O(n^{-3/2} 4^n)$. The
subexponential term in Equation~\eqref{eq:asint-cilindre} is greater, so
the claim holds for $\beta=1$.

Let us proceed with the inductive step. Let $\beta>1$ be the number of
cycles in the boundary of $\Sigma$. A non-contractible cycle
$\mathbb{S}^1$ always separates $\Sigma$ into two connected components,
namely $\Sigma_1$ and $\Sigma_2$. By induction hypothesis,
$$|\mathcal{R}_{\Sigma_j}(n)\backslash \mathcal{P}_{\Sigma_j}(n)|=o\left(\left|\mathcal{P}_{\Sigma_j}(n)\right|\right),$$
for $j=1,2$. Consequently, we only need to deal with irreducible
decompositions of $\Sigma_1$ and $\Sigma_2$. The GF of regular bipartite
subdivisions that reduce to decompositions over $\Sigma_1$ and $\Sigma_2$
has the same asymptotic as $\nP_{\Sigma_1}(z,1)\cdot \nP_{\Sigma_2}(z,1)$.
The estimate of its coefficients is
$$[z^n]\nP_{\Sigma_1}(z,1)\nP_{\Sigma_2}(z,1)= a(\Sigma_1)a(\Sigma_2)[z^n](1-4z)^{-5/2\beta(\Sigma_1)+3}(1-4z)^{-5/2\beta(\Sigma_2)+3}. $$
Applying Proposition~\ref{thm:transfer} gives the estimate
$[z^n]\nP_{\Sigma_1}(z,1)\nP_{\Sigma_2}(z,1)=O\left(n^{5/2\beta-7}\cdot
4^n\right)$. Consequently, when $n$ is large enough the above term is
smaller than $p_{n}^{\Sigma}=O\left(n^{5/2\beta-4}4^n\right)$, and the
result follows.
\end{proof}

The next step consists in adapting the previous argument to surfaces of
positive genus. This second step is done in the following lemma.

\begin{lemma}\label{prop: irreducible-vs-reducible}
Let $\Sigma$ be a surface with boundary. Then
$$|\mathcal{R}_\Sigma(n)\backslash \mathcal{P}_{\Sigma}(n) |=o\left(\left|\mathcal{P}_{\Sigma}(n)\right|\right).$$
\end{lemma}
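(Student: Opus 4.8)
The plan is to generalize the argument of Lemma~\ref{prop:genus-0-induction} from the genus-zero case to arbitrary surfaces with boundary, using a double induction on the pair $(\chi(\Sigma),\beta(\Sigma))$, ordered lexicographically (or by the quantity $-3\chi(\Sigma)/2+\beta(\Sigma)$, which is what governs the polynomial exponent in Lemma~\ref{thm:assympt-enumeration-1}). The base cases are precisely those handled in Lemma~\ref{prop:genus-0-induction}, namely when $\overline{\Sigma}$ is the sphere; for those we are done. So assume $\overline{\Sigma}$ has positive genus (orientable or not) and that the statement holds for every surface whose ``complexity'' $-3\chi/2+\beta$ is strictly smaller.

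For the inductive step, I would start from a bipartite subdivision $M\in\mathcal{R}_\Sigma(n)\setminus\mathcal{P}_\Sigma(n)$. By Lemma~\ref{lemma:irreducibility} (more precisely its proof), non-irreducibility means there is a non-contractible cycle $\mathbb{S}^1$ lying in a white two-dimensional region of $M$. Cutting along $\mathbb{S}^1$ and gluing back disks yields a surface $\Sigma'$ with $\Sigma'\subset\Sigma$, on which $M$ induces a regular bipartite subdivision with the same associated partition; conversely, by Lemma~\ref{lemma:regularitat}, every regular bipartite subdivision of $\Sigma'$ comes from one of $\Sigma$ in this way. Now there are two cases according to whether $\mathbb{S}^1$ separates $\Sigma$ or not. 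If $\mathbb{S}^1$ is separating, $\Sigma'$ is a disjoint union $\Sigma_1\sqcup\Sigma_2$ and by Lemma~\ref{lemma:particio} (applied to $\overline{\Sigma}$, whose cycle structure is inherited) $\chi(\Sigma)=\chi(\Sigma_1)+\chi(\Sigma_2)-2$ while $\beta(\Sigma_1)+\beta(\Sigma_2)=\beta(\Sigma)+2$; one checks that each $\Sigma_i$ has strictly smaller complexity, so the induction hypothesis applies and, exactly as in the genus-zero proof, the count of such reducible subdivisions is governed by $[z^n]\nP_{\Sigma_1}(z,1)\nP_{\Sigma_2}(z,1)$, which by Lemma~\ref{thm:assympt-enumeration-1} and Proposition~\ref{thm:transfer} grows like $O(n^{-3\chi(\Sigma)/2+\beta(\Sigma)-2}\,4^n)$, one polynomial power below $p_n^{\Sigma}$. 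If $\mathbb{S}^1$ is non-separating (possible only because of positive genus), then $\Sigma'$ is connected with either one more boundary component and larger Euler characteristic (two-sided case) or the same $\beta$ and larger $\chi$ (one-sided case); in both subcases $-3\chi(\Sigma')/2+\beta(\Sigma')< -3\chi(\Sigma)/2+\beta(\Sigma)$, so $\mathcal{R}_{\Sigma'}(n)$, and hence the reducible part of $\mathcal{R}_\Sigma(n)$ coming from this cut, contributes $O(n^{-3\chi(\Sigma)/2+\beta(\Sigma)-2}\,4^n)$ as well.

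Finally I would note that there are only finitely many homeomorphism types of surfaces $\Sigma'$ arising from cutting $\Sigma$ along a non-contractible cycle (bounded in number by a function of $\chi(\Sigma)$ and $\beta(\Sigma)$), so $\mathcal{R}_\Sigma(n)\setminus\mathcal{P}_\Sigma(n)$ is contained in a finite union of images of the sets $\mathcal{R}_{\Sigma'}(n)$ under the maps of Lemma~\ref{lemma:regularitat}; summing the finitely many $o(4^n n^{-3\chi(\Sigma)/2+\beta(\Sigma)-1})$ bounds, and comparing with $|\mathcal{P}_\Sigma(n)|=a(\Sigma)\,n^{-3\chi(\Sigma)/2+\beta(\Sigma)-1}4^n(1+O(n^{-1/2}))$ from Lemma~\ref{thm:assympt-enumeration-1}, gives $|\mathcal{R}_\Sigma(n)\setminus\mathcal{P}_\Sigma(n)|=o(|\mathcal{P}_\Sigma(n)|)$, as claimed.

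The main obstacle I anticipate is the bookkeeping of the non-separating case: one must verify carefully that cutting along a one- or two-sided non-contractible cycle really does strictly decrease the complexity measure $-3\chi/2+\beta$ in every subcase (including when the cut disconnects a handle or kills a cross-cap), and that the finiteness claim for the possible $\Sigma'$ is legitimate; once the complexity measure is seen to drop, the analytic estimate is a routine repetition of the computation already carried out in Lemma~\ref{prop:genus-0-induction}, and the wrapping-up via Proposition~\ref{thm:transfer} is immediate.
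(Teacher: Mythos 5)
Your proof follows essentially the same route as the paper's: an induction grounded in Lemma~\ref{prop:genus-0-induction}, cutting along a non-contractible cycle inside a white region, distinguishing the connected (non-separating) and disconnected (separating) cases, and comparing the resulting asymptotics via Lemma~\ref{thm:assympt-enumeration-1} and Proposition~\ref{thm:transfer}. One bookkeeping slip: in the separating case you assert both $\chi(\Sigma)=\chi(\Sigma_1)+\chi(\Sigma_2)-2$ and $\beta(\Sigma_1)+\beta(\Sigma_2)=\beta(\Sigma)+2$, which are incompatible --- the first holds when the two newly created boundary circles are capped with disks (the convention of Lemma~\ref{lemma:particio}, and the one needed so that the induced subdivisions live on the smaller surfaces), in which case $\beta(\Sigma_1)+\beta(\Sigma_2)=\beta(\Sigma)$ and the product estimate gives the even stronger bound $O\bigl(n^{-3\chi(\Sigma)/2+\beta(\Sigma)-4}\,4^n\bigr)$ rather than your exponent $-2$; with either consistent convention the term remains $o\bigl(|\mathcal{P}_{\Sigma}(n)|\bigr)$, so the conclusion is unaffected.
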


\begin{proof}
Let $\Sigma$ be a surface with boundary and Euler characteristic
$\chi(\Sigma)$. Consider a non-contractible cycle $\mathbb{S}^1$ contained
on a two-dimensional region. Observe that $\mathbb{S}^1$ can be either
one- or two-sided. Let $\Upsilon$ be the surface obtained from
$\Sigma\backslash\mathbb{S}^1$ by pasting a disk (or two disks) along the
cut (depending on whether $\mathbb{S}^1$ is one- or two-sided). Two
situations may occur:

\begin{enumerate}
\item $\Upsilon$ is connected and $\beta(\Upsilon)=\beta(\Sigma)$. In this case,
the Euler characteristic has been increased by either one if the cycle is
one-sided or by two if the cycle is two-sided. This result appears as
Lemma 4.2.4 in~\cite{Mohar:graphs-on-surfaces}.
\item The resulting surface  has two connected components, namely $\Upsilon=\Upsilon_1 \sqcup \Upsilon_2$. In this case, the total number of boundaries is $\beta(\Upsilon)=\beta(\Upsilon_1)+\beta(\Upsilon_2)$. By Lemma~\ref{lemma:particio}, $\chi(\Sigma)=\chi(\Upsilon_1) + \chi(\Upsilon_2)-2$.
\end{enumerate}

Clearly, the  base of the induction is given by
Lemma~\ref{prop:genus-0-induction}. The
 induction argument distinguishes between the following two cases: \\

\noindent{\em Case 1}. $\Upsilon$ is connected, by induction on the genus,
$|\mathcal{R}_{\Upsilon}(n)\backslash\mathcal{P}_{\Upsilon}(n)|<
_{n\rightarrow \infty}|\mathcal{P}_{\Upsilon}(n)|$. Additionally, by
Expression~\eqref{eq:asympt-enumeration-2}, an upper bound for
$|\mathcal{P}_{\Upsilon}(n)|$ is
$$[z^n]\nP_{\Upsilon}(z,1)= a(\Upsilon) n^{-3/2\chi(\Upsilon)+\beta(\Upsilon)-1} 4^n \left(1+O\left(n^{-1/2}\right)\right)= o\left( n^{-3/2\chi(\Sigma)+\beta(\Sigma)-1} 4^n \right). $$

\noindent{\em Case 2}. $\Upsilon$ is not connected. Then
$\Upsilon=\Upsilon_1 \sqcup \Upsilon_2$,
$\beta(\Sigma)=\beta(\Upsilon)=\beta(\Upsilon_1)+\beta(\Upsilon_2)$, and
$\chi(\Sigma)=\chi(\Upsilon_1) + \chi(\Upsilon_2)-2$. Again, by induction
hypothesis we only need to deal with the irreducible ones. Consequently,
$$[z^n]\nP_{\Upsilon_1}(z,1)\nP_{\Upsilon_2}(z,1)= a(\Upsilon_1)a(\Upsilon_2)[z^n](1-4z)^{3/2(\chi(\Upsilon_1)+\chi(\Upsilon_2))-(\beta(\Upsilon_1)+\beta(\Upsilon_2))+3}.$$
The exponent of  $(1-4z)$ in the last equation can be written as
$3/2\chi(\Sigma)-\beta(\Sigma)+3$. Consequently, the value
$[z^n]\nP_{\Upsilon_1}(z,1)\nP_{\Upsilon_2}(z,1)$ is bounded, for $n$
large enough, by
$$n^{-3/2\chi(\Sigma)+\beta(\Sigma)-3-1}\cdot 4^n = n^{-3/2\chi(\Sigma)+\beta(\Sigma)-4} 4^n=o\left(n^{-3/2\chi(\Sigma)+\beta(\Sigma)-1} 4^n\right).$$
Hence the contribution is smaller than the one given by
$|\mathcal{P}_{\Sigma}(n)|$, as claimed.
\end{proof}

\subsection{Upper bounds for non-crossing partitions}\label{subsection:final-result}

In this subsection we summarize all the steps in the previous subsections
of this section. Our main result is the following:
\begin{thm}\label{theorem: non-crossing-partition-final}
Let $\Sigma$ be a surface with boundary. Then the number
$|\Pi_{\Sigma}(n)|$ verifies
\begin{equation}\label{eq:thm-111}
|\Pi_{\Sigma}(n)|\leq _{n\rightarrow
\infty}\frac{c(\Sigma)}{\Gamma\left(-3/2\chi(\Sigma)+\beta(\Sigma)\right)}\cdot
n^{-3/2\chi(\Sigma)+\beta(\Sigma)-1} \cdot 4^n
\left(1+O\left(n^{-1/2}\right)\right),
\end{equation}
where $c(\Sigma)$ is a function depending only on $\Sigma$.
\end{thm}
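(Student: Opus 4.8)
The plan is to assemble the chain of inequalities and asymptotic estimates established in the previous subsections. First I would recall the ``topological reduction'' part: by Lemma~\ref{lemma:only-regular} (equation~\eqref{eooj}) every non-crossing partition on $\Sigma$ is realized by some \emph{regular} bipartite subdivision, so $|\Pi_\Sigma(n)|\le|\mathcal{R}_\Sigma(n)|$. Next, I would split $\mathcal{R}_\Sigma(n)$ as the disjoint union of $\mathcal{P}_\Sigma(n)$ and $\mathcal{R}_\Sigma(n)\setminus\mathcal{P}_\Sigma(n)$, so that
\begin{equation*}
|\Pi_\Sigma(n)|\ \le\ |\mathcal{R}_\Sigma(n)|\ =\ |\mathcal{P}_\Sigma(n)|\ +\ |\mathcal{R}_\Sigma(n)\setminus\mathcal{P}_\Sigma(n)|.
\end{equation*}

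Then I would invoke the two key quantitative inputs. Lemma~\ref{thm:assympt-enumeration-1} gives the precise asymptotics
\begin{equation*}
|\mathcal{P}_\Sigma(n)|\ =\ \frac{c(\Sigma)}{\Gamma(-3\chi(\Sigma)/2+\beta(\Sigma))}\cdot n^{-3\chi(\Sigma)/2+\beta(\Sigma)-1}\cdot 4^n\bigl(1+O(n^{-1/2})\bigr),
\end{equation*}
and Lemma~\ref{prop: irreducible-vs-reducible} shows that $|\mathcal{R}_\Sigma(n)\setminus\mathcal{P}_\Sigma(n)|=o(|\mathcal{P}_\Sigma(n)|)$. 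Combining these, the right-hand side of the displayed inequality above is $|\mathcal{P}_\Sigma(n)|\,(1+o(1))$, which is exactly the claimed bound~\eqref{eq:thm-111}; the $o(1)$ term from the reducible part is absorbed into (or dominated by) the $O(n^{-1/2})$ already present in the estimate for $|\mathcal{P}_\Sigma(n)|$. One small bookkeeping point I would check is that the error term stays $O(n^{-1/2})$ rather than being weakened: the reducible contribution is polynomially smaller by a factor $n^{-3}$ or more (as the proofs of Lemmas~\ref{prop:genus-0-induction} and~\ref{prop: irreducible-vs-reducible} show, via the exponent shift $3/2\chi(\Sigma)-\beta(\Sigma)+3$), so it is indeed $O(n^{-1/2})$ relative to the main term and does not degrade the stated remainder.

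There is genuinely no hard step left at this point: all the substantive work — the topological normalization reducing arbitrary bipartite subdivisions to regular ones (Section~\ref{subsect:reduction-bipartite-maps}), the tree/scheme decomposition and singularity analysis giving the main asymptotic term (Subsection~\ref{apen:3.5-enum}), and the double induction on $(\chi(\Sigma),\beta(\Sigma))$ showing reducible subdivisions are asymptotically negligible (Subsection~\ref{apen:4-enum}) — has already been carried out. If I had to name the most delicate point of the whole argument it would be Lemma~\ref{prop: irreducible-vs-reducible}, where cutting along a non-contractible cycle must be handled in both the ``connected'' and ``disconnecting'' cases with the Euler-characteristic bookkeeping of Lemmas~\ref{lemma:particio} and 4.2.4 of~\cite{Mohar:graphs-on-surfaces}; but that lemma is assumed here. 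So the proof of Theorem~\ref{theorem: non-crossing-partition-final} is simply the concatenation $|\Pi_\Sigma(n)|\le|\mathcal{R}_\Sigma(n)|$, $|\mathcal{R}_\Sigma(n)|=|\mathcal{P}_\Sigma(n)|+o(|\mathcal{P}_\Sigma(n)|)$, and the explicit formula for $|\mathcal{P}_\Sigma(n)|$, which I would write out in three or four lines.
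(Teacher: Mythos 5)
Your proposal is correct and follows essentially the same route as the paper: the paper's proof is exactly the concatenation $|\Pi_{\Sigma}(n)|\leq|\mathcal{R}_{\Sigma}(n)|$ (via Lemma~\ref{lemma:only-regular}), the splitting $\mathcal{R}_{\Sigma}(n)=\mathcal{P}_{\Sigma}(n)\cup\left(\mathcal{R}_\Sigma(n)\backslash \mathcal{P}_{\Sigma}(n)\right)$, the asymptotics of Lemma~\ref{thm:assympt-enumeration-1}, and the negligibility of the reducible part from Lemma~\ref{prop: irreducible-vs-reducible}. Your only minor imprecision is the claim that the reducible contribution is smaller by ``a factor $n^{-3}$ or more''; in the connected case of Lemma~\ref{prop: irreducible-vs-reducible} the polynomial gap can be as small as $n^{-3/2}$ (one-sided cycle), but this is still within $O\left(n^{-1/2}\right)$, so the stated remainder is unaffected.
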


\begin{proof}

By definition of non-crossing partition (recall
Subsection~\ref{subsection:definitions-non-crossing-partition})
$\left|\Pi_{\Sigma}(n)\right| \leq \left|\mmS_{\Sigma}(n)\right|$, as
non-crossing partitions are defined in terms of bipartite subdivisions,
and a different pair of bipartite subdivisions may define the same
non-crossing partition. We show in Lemma~\ref{lemma:only-regular} that in
fact $\left|\Pi_{\Sigma}(n)\right| \leq \left|\mR_{\Sigma}(n)\right|$, as
each bipartite subdivision can be reduced to a regular bipartite
subdivision by a series of joining boundaries and cutting vertices
operations. We partition the set $\mR_{\Sigma}(n)$ using the notion of
irreducibility (see Lemma~\ref{lemma:irreducibility}) in the form
$$\mR_{\Sigma}(n)= \mathcal{P}_{\Sigma}(n) \cup \left(\mathcal{R}_\Sigma(n)\backslash \mathcal{P}_{\Sigma}(n)\right).$$
Estimates for $\left|\mathcal{P}_{\Sigma}(n)\right|$ are obtained in
Lemma~\ref{thm:assympt-enumeration-1}, getting the bound stated in
Equation~\eqref{eq:asympt-enumeration-2}. In Lemma~\ref{prop:
irreducible-vs-reducible} we prove that $|\mathcal{R}_\Sigma(n)\backslash
\mathcal{P}_{\Sigma}(n)
|=o\left(\left|\mathcal{P}_{\Sigma}(n)\right|\right)$, hence the estimate
in Equation~\eqref{eq:thm-111} holds.\end{proof}

\section{Bounding $c(\Sigma)$ in terms of cubic maps}\label{apen:bounds-C}

In this section we obtain upper bounds for $c(\Sigma)$ by doing a more
refined analysis over functions $g_{\mathfrak{s}}(z)$ (recall the notation
used in Subsection~\ref{apen:3.5-enum}). This is done in the following
proposition.

\begin{lemma}\label{prop: C(Sigma)}
The function $c(\Sigma)$ defined in Lemma~\ref{thm:assympt-enumeration-1}
satisfies
\begin{equation}\label{eq: C(Sigma)}
c(\Sigma)\leq 2^{\beta(\Sigma)}|\mathfrak{C}_\Sigma|.
\end{equation}
\end{lemma}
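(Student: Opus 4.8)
The plan is to prove the sharper statement that $g_{\mathfrak{s}}(1/4)\le 2^{\beta(\Sigma)}$ for \emph{every} cubic scheme $\mathfrak{s}\in\mathfrak{C}_\Sigma$; summing over $\mathfrak{C}_\Sigma$ and using the identity $c(\Sigma)=\sum_{\mathfrak{s}\in\mathfrak{C}_\Sigma}g_{\mathfrak{s}}(1/4)$ from the proof of Lemma~\ref{thm:assympt-enumeration-1} then gives~\eqref{eq: C(Sigma)} at once. So I fix such an $\mathfrak{s}$ and read off its contribution to $\nP_\Sigma(z,1)$ from Equation~\eqref{eq:sing-u} specialised at $u=1$: it is the product of one double-tree series ($\nT_1(z,1)$, $\nT_2(z,1)$ or $\nT_3(z,1)$) for each internal edge of $\mathfrak{s}$ according to the colours of its endpoints; of one pointed-tree series ($\nT^{\bullet}(z,1)$ or $\nB^{\bullet}(z,1)$) for each root edge according to the colour of the vertex it reaches; and of the vertex factors $\nT(z,1)^{\dd(x)-2r(x)}$ and $\nB(z,1)^{\dd(y)-2r(y)}$ over the (degree-three) internal vertices $x\in v_1(\mathfrak{s})$, $y\in v_2(\mathfrak{s})$. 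Each of the five tree and pointed-tree series is singular at $z=1/4$ with leading term of order $(1-4z)^{-1/2}$, whereas the vertex factors are analytic and nonzero there; hence $g_{\mathfrak{s}}(1/4)$ is exactly the product of the leading coefficients of the former with the values at $z=1/4$ of the latter.

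Then I would carry out the constant computation. From Lemma~\ref{lemma:planar-trees} one has $\nT(1/4,1)=2$ and $\nB(1/4,1)=\tfrac12$; from the explicit expressions the double trees have leading coefficients $\tfrac1{16}$, $\tfrac14$, and (for $\nT_3$) at most $1$; and since $\nT^{\bullet}=z\partial_z\nT$, $\nB^{\bullet}=z\partial_z\nB$ one computes $\nT^{\bullet}(z,1)=(1-4z)^{-1/2}-\nT(z,1)$ and $\nB^{\bullet}(z,1)=z\,(1-4z)^{-1/2}$, with leading coefficients $1$ and $\tfrac14$. The key device is to redistribute the vertex factors over the edges: since $\dd(x)-2r(x)$ equals $+1$ summed over the internal edges incident to $x$ plus $-1$ summed over the root edges incident to $x$ (using $\dd(x)=3$), I attach a factor $\nT(1/4,1)^{\pm1}$ (resp.\ $\nB(1/4,1)^{\pm1}$) to each edge-end at $x$, and likewise at the second endpoint. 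This rewrites $g_{\mathfrak{s}}(1/4)$ as a product over the edges of $\mathfrak{s}$ in which a $\blacksquare-\blacksquare$ internal edge contributes $\tfrac1{16}\cdot 2^{2}=\tfrac14$, a $\square-\blacksquare$ one contributes $\tfrac14\cdot2\cdot\tfrac12=\tfrac14$, a $\square-\square$ one contributes at most $1\cdot(\tfrac12)^{2}=\tfrac14$, and each root edge contributes $1\cdot\tfrac12=\tfrac12$ if it meets a $\blacksquare$-vertex or $\tfrac14\cdot2=\tfrac12$ if it meets a $\square$-vertex, in particular at most $2$.

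Since a cubic scheme on $\overline{\Sigma}$ has exactly $\beta(\Sigma)$ roots, hence $\beta(\Sigma)$ root edges, and since every internal-edge factor above is at most $1$, the regrouped product is at most $2^{\beta(\Sigma)}$; this is the desired per-scheme bound, and summing over $\mathfrak{C}_\Sigma$ yields~\eqref{eq: C(Sigma)}. The only real labour is the bookkeeping in the middle step: one has to pair each tree factor with its correct multiplicity, and in particular to keep track of the shift $-2r(x)$, which is precisely what makes the \emph{a priori} dangerous pointed-tree contributions at the root edges collapse to the harmless bound $2$ instead of something larger. One also notes that only the inequalities on the leading coefficients (``$\le\tfrac1{16}$'', ``$\le\tfrac14$'', ``$\le1$'', ``$\le\tfrac14$'') are used, so the estimate is robust to their precise values.
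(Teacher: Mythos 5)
Your proposal is correct and takes essentially the same route as the paper: for each cubic scheme $\mathfrak{s}\in\mathfrak{C}_\Sigma$ you bound $g_{\mathfrak{s}}(1/4)$ by the product of the constants of the tree, double-tree and pointed-tree series at $z=1/4$, obtain the per-scheme bound $2^{\beta(\Sigma)}$, and sum over $\mathfrak{C}_\Sigma$. The only difference is bookkeeping: you redistribute the vertex factors $\nT^{\pm1},\nB^{\pm1}$ onto edge-ends to get a local per-edge estimate (at most $1$ per internal edge, at most $2$ per root edge), whereas the paper bounds each factor type by its worst-case constant and counts edges and vertices of cubic schemes via Euler's formula; both computations yield the same bound.
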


\begin{proof} For each $\mathfrak{s}\in\mathfrak{C}_\Sigma$, we obtain bounds for $g_{\mathfrak{s}}(1/4)$. We use Table~\ref{table:trees2}, which is a
simplification of  Table~\ref{table:trees}. Now we are only concerned
about the constant term of each GF. Table~\ref{table:trees2} brings the
following information: the greatest contribution from double trees, trees,
and families of pointed trees comes from
$\mathcal{T}_{\square-\blacksquare}$, $\mathcal{T}$, and
$\mathcal{T}^{\bullet}$, respectively. The constants are $1/4$, $2$, and
$4$, respectively. Each cubic map has $-3\chi(\Sigma)+2\beta(\Sigma)$
edges ($\beta(\Sigma)$ of them being roots) and
$-2\chi(\Sigma)+\beta(\Sigma)$ vertices ($\beta(\Sigma)$ of them being
incident with roots). This characterization provides the following upper
bound for $g_S(1/4)$:
\begin{equation}\label{eq: C(Sigma2)}
g_{\mathfrak{s}}(1/4)\leq
\left(\frac{1}{4}\right)^{2\beta(\Sigma)-3\chi(\Sigma)-\beta(\Sigma)}
2^{-3\cdot
2\chi(\Sigma)+\beta(\Sigma)}4^{\beta(\Sigma)}=2^{\beta(\Sigma)}.
\end{equation}
\end{proof}

{\renewcommand{\arraystretch}{1.2}
\begin{table}[htb]
\begin{center}
\begin{tabular}{c|l|l}
  \textbf{GF} & $\,\,\,\,\,\,\,\,\,\,\,\,\,\,\,\,\,\,$\textbf{Expression}   & \textbf{Development at $z=1/4$}  \\\hline
  $\nT_1(z)$        & $(1-4z)^{-1/2}/16+\dots$          & $1/16(1-4z)^{-1/2}+\dots$  \\
  $\nT_2(z)$        & $(1-4z)^{-1/2}/4+\dots$           & $1/4(1-4z)^{-1/2}+\dots$ \\
  $\nT_3(z)$        & $z^2/16(1-4z)^{-1/2}+\dots$ & $1/256(1-4z)^{-1/2}+\dots$  \\
  $\nT(z)$          & $1/(2z)+\dots$                     & $2+\dots$  \\
  $\nB(z)$          & $1/2+\dots$                        & $1/2+\dots$  \\
  $\nT^\bullet(z)$  & $(1-4z)^{-1/2}/z+\dots$           & $4(1-4z)^{-1/2}+\dots$  \\
  $\nB^\bullet(z)$  & $(1-4z)^{-1/2}$                    & $(1-4z)^{-1/2}$  \\
\end{tabular}\bigskip
\caption{A simplification of Table~\ref{table:trees} used in
Lemma~\ref{thm:assympt-enumeration-1}.\label{table:trees2}}
\end{center}
\end{table}}

The value of $\mathfrak{C}_\Sigma$ can be bounded using the results
in~\cite{tg,cubic-maps}. Indeed, Gao shows in \cite{cubic-maps} that the
number of rooted cubic maps with $n$ vertices in an orientable surface of
genus\footnote{the \emph{genus} $g(\Sigma)$ of an orientable surface
$\Sigma$ is defined as $g(\Sigma) =1-\chi(\Sigma)/2$ (see
\cite{Mohar:graphs-on-surfaces}).} $g$ is asymptotically equal to
$$t_g \cdot n^{5(g-1)/2}\cdot (12\sqrt{3})^n,$$

where the constant $t_g$ tends to zero as $g$ tends to infinity~\cite{tg}.
A similar result is also stated in~\cite{cubic-maps} for non-orientable
surfaces. By duality, the number of rooted cubic maps on a surface
$\overline{\Sigma}$ of genus $g(\Sigma)$ with $\beta(\Sigma)$ faces is
asymptotically equal to $t_{g(\Sigma)} \cdot
\beta(\Sigma)^{5\left(g(\Sigma)-1\right)/2}\cdot
(12\sqrt{3})^{\beta(\Sigma)}$.


To conclude, we observe that the elements of $\mathfrak{C}_{\Sigma}$ are
obtained from rooted cubic maps with $\beta(\Sigma)$ faces by adding a
root on each face different from the root face. Observe that each edge is
incident with at most two faces, and that the total number of edges is
$-3\chi(\Sigma)$. Consequently, the number of ways of rooting a cubic map
with $\beta(\Sigma)-1$ unrooted faces is bounded by
$\binom{-6\chi(\Sigma)}{\beta(\Sigma)-1}$.


Lemma~\ref{prop: C(Sigma)}, together  with the discussion above, yields
the following bound for $c(\Sigma)$.

\begin{prop}\label{prop:bound_C-sigma}
The constant $c(\Sigma)$ verifies

$$c(\Sigma)< t_{1-\chi(\Sigma)/2} \cdot \beta(\Sigma)^{-5\chi(\Sigma)/2}\cdot (12\sqrt{3})^{\beta(\Sigma)}\cdot \binom{-6\chi(\Sigma)}{\beta(\Sigma)-1}\cdot 2^{\beta(\Sigma)}.
$$
%
\end{prop}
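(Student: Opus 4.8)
The statement is an assembly of the estimates accumulated just above it, so the proof is short. The plan is to start from the inequality $c(\Sigma)\le 2^{\beta(\Sigma)}\,|\mathfrak{C}_\Sigma|$ furnished by Lemma~\ref{prop: C(Sigma)}, and then to bound the cardinality $|\mathfrak{C}_\Sigma|$ of the set of cubic schemes, i.e. of cubic maps on $\overline\Sigma$ with $\beta(\Sigma)$ faces carrying one root on each face.

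First I would observe that forgetting the roots on all but one face sends $\mathfrak{C}_\Sigma$ into the set of cubic maps on $\overline\Sigma$ with $\beta(\Sigma)$ faces that are rooted on a single face, and that an element of $\mathfrak{C}_\Sigma$ is recovered from such a singly-rooted map by a choice of root on each of the remaining $\beta(\Sigma)-1$ faces. The number of singly-rooted cubic maps on $\overline\Sigma$ with $\beta(\Sigma)$ faces is, by duality together with Gao's enumeration of cubic maps~\cite{cubic-maps} (and $t_g\to 0$ from~\cite{tg}), of order $t_{g(\Sigma)}\cdot\beta(\Sigma)^{5(g(\Sigma)-1)/2}\cdot(12\sqrt{3})^{\beta(\Sigma)}$; substituting $g(\Sigma)=1-\chi(\Sigma)/2$ and bounding the exponent $5(g(\Sigma)-1)/2=-5\chi(\Sigma)/4$ by $-5\chi(\Sigma)/2$ (valid since $\chi(\Sigma)\le 0$ whenever $\beta(\Sigma)\ge 2$, and vacuous when $\beta(\Sigma)=1$), this is at most $t_{1-\chi(\Sigma)/2}\cdot\beta(\Sigma)^{-5\chi(\Sigma)/2}\cdot(12\sqrt{3})^{\beta(\Sigma)}$. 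For the number of ways to reinstate the remaining roots, Euler's formula combined with cubicity forces any map in $\mathfrak{C}_\Sigma$ to have exactly $-3\chi(\Sigma)$ edges, hence at most $-6\chi(\Sigma)$ edge-side incidences; since distinct faces require distinct such incidences, there are at most $\binom{-6\chi(\Sigma)}{\beta(\Sigma)-1}$ admissible choices.

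Multiplying these two contributions with the factor $2^{\beta(\Sigma)}$ from Lemma~\ref{prop: C(Sigma)} gives
\[ c(\Sigma)\le 2^{\beta(\Sigma)}\,|\mathfrak{C}_\Sigma|\le t_{1-\chi(\Sigma)/2}\cdot\beta(\Sigma)^{-5\chi(\Sigma)/2}\cdot(12\sqrt{3})^{\beta(\Sigma)}\cdot\binom{-6\chi(\Sigma)}{\beta(\Sigma)-1}\cdot 2^{\beta(\Sigma)}, \]
which is precisely the asserted bound.

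The point requiring the most care is the passage from the \emph{asymptotic} enumeration results of~\cite{cubic-maps,tg} to the honest upper bound used above (the count $|\mathfrak{C}_\Sigma|$ is a fixed number attached to $\Sigma$, while Gao's formula is an equivalence in the number of vertices), together with the uniform treatment of non-orientable $\overline\Sigma$, for which the analogous statement in~\cite{cubic-maps} must be invoked; the Euler-formula bookkeeping and the substitution for $g(\Sigma)$ are routine by comparison.
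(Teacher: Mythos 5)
Your argument is essentially the paper's own: it combines Lemma~\ref{prop: C(Sigma)} with the discussion preceding the proposition, namely Gao's asymptotic count of rooted cubic maps (via duality, for $\beta(\Sigma)$ faces) multiplied by the $\binom{-6\chi(\Sigma)}{\beta(\Sigma)-1}$ choices of extra roots, so the proposal matches the intended proof. If anything you are more careful than the paper, both in justifying the passage from the exponent $-5\chi(\Sigma)/4$ to $-5\chi(\Sigma)/2$ and in flagging that Gao's formula is only an asymptotic equivalence being used as an upper bound for a fixed quantity --- a caveat the paper silently glosses over.
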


\SP
\paragraph{\textbf{Further research.}} In this article, we provided upper bounds for $|\Pi_{\Sigma}(n)|$. This upper bound is exact for the exponential growth (recall Section~\ref{sec:intro}). However,
we cannot assure exactness for the subexponential growth: the main problem
in order to state asymptotic equalities is that $|\Pi_{\Sigma}(n)| \neq
|\mathcal{P}_{\Sigma}(n)|$: there are different irreducible bipartite
subdivisions with $n$ vertices which define the same non-crossing
partition (see Figure~\ref{twodif} for an example). Hence, an open problem
in this context is finding more precise lower bounds for the number of
non-crossing partitions.


%
%

%


Another interesting problem is based on generalizing the notion of
$k$-triangulation to the partition framework and getting the asymptotic
enumeration: the enumeration of $k$-triangulations on a disk was found
using algebraic methods in~\cite{Jonsson-k-triangulations}. This notion
can be easily translated to the non-crossing partition framework on a
disk, and the {\sl exact} enumeration in this case seems to be more
involved. In the same way as non-crossing partitions on surfaces play a
crucial role for designing algorithms for graphs on surfaces
(see~\cite{RST10_algo_Arxiv}), it turns out that the enumeration mentioned
above is of capital importance in order to design algorithm for families
of graphs defined by excluding minors.

$ $\\
\paragraph{\textbf{Acknowledgements.}} We would like to thank Marc Noy for  pointing us to references~\cite{tg,cubic-maps}.


{\small \bibliography{comb}}
\bibliographystyle{acm}
\end{document}